\def\thefootnote{}
\newtheorem{thm}{\bf Theorem}
\newtheorem{lem}{\bf Lemma}
\newtheorem{example}{\bf Example}
\newtheorem{remark}{\bf Remark}
\newtheorem{rem}{Remark}
\newtheorem{cor}{Corollary}
\def\h{\hspace{-0.2cm}}
\def\argmin{{\rm{arg}}\hspace{-0.05cm}\min}
\begin{document}
\title{Two-parameter TSCSP method for solving complex symmetric  system of linear equations}
\author{{ Davod Khojasteh Salkuyeh and Tahereh Salimi Siahkolaei}\\[2mm]
\textit{{\small Faculty of Mathematical Sciences, University of Guilan, Rasht, Iran}} \\
\textit{{\small E-mails: khojasteh@guilan.ac.ir, salimi-tahereh@phd.guilan.ac.ir}\textit{}}}
\date{}
\maketitle
\noindent{\bf Abstract.} We introduce a two-parameter version of the two-step scale-splitting iteration method, called TTSCSP, for solving a broad class of complex symmetric system of linear equations. We present some conditions for the convergence of the method. An upper bound for the spectral radius of the method is presented and optimal parameters which minimize this bound are given. Inexact version of the TTSCSP iteration method (ITTSCSP) is also presented. Some numerical experiments are reported to verify  the effectiveness of the TTSCSP iteration method and the numerical results are compared with those of the TSCSP, the SCSP and the PMHSS iteration methods. Numerical comparison of the ITTSCSP method with the inexact version of TSCSP, SCSP and PMHSS are  presented. We also compare the numerical results of the BiCGSTAB method in conjunction with the TTSCSP and the ILU preconditioners.
 \\[-3mm]

\noindent{\bf  AMS subject classifications}: 65F10, 65F50, 65F08.\\
\noindent{\bf  Keywords}: {complex linear systems, symmetric positive definite, MHSS, PMHSS, GSOR, SCSP, TSCSP.}

\pagestyle{myheadings}\markboth{D.K. Salkuyeh and  T. Salimi Siahkolaei}{Two-parameter TSCSP method}

\thispagestyle{empty}

\section{Introduction} \label{SEC1}

Consider the system of linear equations of the form
\begin{equation}\label{Eq1}
Au=(W+iT)u=b,\end{equation}
where $W,T \in \mathbb{R }^{n \times n}$, $u=x+iy$ and $b=p+iq$, such that the vectors $x,y,p$ and $q$ are in $\mathbb{R}^{n}$ and $i=\sqrt{-1}$.
We assume that the matrices $W$ and $T$ are symmetric positive semidefinite matrices with at least one least one of them, e.g., $W$, is positive definite.
Systems of the form (\ref{Eq1}) arise in many important problems in scientific computing and engineering applications.
For example, numerical solution of the Helmholtz equation and time-dependent PDEs \cite{Bertaccini}, diffuse optical tomography \cite{Arridge},
algebraic eigenvalue problems \cite{Moro, Schmitt}, molecular scattering \cite{Poirier}, structural dynamics \cite{Feriani} and
lattice quantum chromodynamics \cite{Frommer}.

In recent years, there have been many works for solving Eq. (\ref{Eq1}), and several iterative methods have been presented in the literature.
For example, based on the Hermitian and skew-Hermitian splitting (HSS) of the matrix $A$, Bai et al. in \cite{Bai1} introduced the
Hermitian/skew-Hermitian splitting (HSS) method to solve non-Hermitian positive definite system of linear equations. Next, Bai et al.
presented a modified version of the HSS iterative method say (MHSS) \cite{Bai2} to solve systems of the form (\ref{Eq1}).
Then, a preconditioned version of the MHSS iteration method, called PMHSS, was presented by Bai et al. in \cite{Bai3}.

Let
\begin{equation}\label{Eq2}
A=H+S,
\end{equation}
be the Hermitian/Skew-Hermitian (HS) splitting of the matrix $A$, where
\[
H=\frac{1}{2}(A+A^H)=W, \quad S=\frac{1}{2}(A-A^H)=iT,
\]
with $A^H$ being the conjugate transpose of $A$. Let also $V\in\Bbb{R}^{n\times n}$ be a symmetric positive definite.  Then, the PMHSS iteration method
can be described as follows.

\bigskip

\noindent {\bf The PMHSS method}: \verb"Let" $u^{(0)} \in  {\mathbb{C }^{n}}$ \verb"be an initial guess". \verb"For" $k=0,1,2,\ldots$, until $\{u^{(k)}\}$ \verb"converges, compute" ${u^{(k+1)}}$  \verb"according to the following sequence":
\begin{equation}\label{Eq4}
\begin{cases}
(\alpha V+W){u^{(k+\frac{1}{2})}}=(\alpha V-iT){u^{(k)}}+b, \\
(\alpha V+T){u^{(k+1)}}=(\alpha V+iW){u^{(k+\frac{1}{2})}}-ib,
\end{cases}
\end{equation}
\verb"where" $\alpha$ \verb"is a given positive constant."

\bigskip

When the matrix  $V$ is equal to the identity matrix, then the PMHSS iteration method reduces to MHSS. In \cite{Bai3},
it has been proved that the PMHSS iteration converges unconditionally to the unique solution of the complex symmetric
system (\ref{Eq1}) for any initial guess. Numerical implementation presented in \cite{Bai3} show that a Krylov subspace
iteration method such as generalized minimal residual (GMRES) \cite{saad} in conjunction with the resulting PMHSS preconditioner  is very efficient to solve
the system (\ref{Eq1}). In particular, both the PMHSS iteration method and the MHSS-preconditioned GMRES show meshsize-independent
and parameter-insensitive convergence behaviour (see \cite{Bai3}).

In \cite{Axelsson}, Axelsson and Kucherov showed that it is possible to avoid complex arithmetic by rewriting Eq. \eqref{Eq1} to several real-valued forms. Among them, we consider the follwing real-form
\begin{equation}\label{Eq5}
\mathcal{A} u=
 \begin{bmatrix}
W & -T \\
T & W
\end{bmatrix}
 \begin{bmatrix}
x \\
y
 \end{bmatrix}
 =
\begin{bmatrix}
p \\
q
\end{bmatrix}.
\end{equation}
\noindent Under our hypotheses, it can be easily proved that the matrix $A$ is nonsingular. Benzi and  Bertaccini in \cite{Benzi-Berta} investigated several block preconditioners for real equivalent formulations of
complex linear systems when the coefficient matrix $A$ is complex symmetric.
Bai et al. in \cite{BBCW} presented a preconditioned modified Hermitian and  skew-Hermitian splitting iteration method for solving and preconditioning of the system \eqref{Eq5}. Bai et al. in \cite{Bai4}, applied the generalized successive overrelaxation (GSOR)  method for augmented linear systems. Recently, using the idea of \cite{Bai4},  Salkuyeh et al. in \cite{Salkuyeh1} solved the system (\ref{Eq5}) by the generalized successive overrelaxation (GSOR) iterative method.
This method can be written as follows.

\bigskip

\noindent {\bf The GSOR iteration method}: \verb"Let" $(x^{(0)}; y^{(0)}) \in  {\mathbb{R }^{n}}$ \verb"be an initial guess. For" $k=0,1,2,\ldots$, \verb"until" $\{(x^{(k)};y^{(k)})\}$ \verb"converges, compute" ${(x^{(k+1)};y^{(k+1)})}$  \verb"according to the" \\ \verb"following sequence"
\begin{equation}\label{Eq6}
\begin{cases}
W x^{(k+1)}=(1-\alpha)Wx^{(k)}+\alpha T y^{(k)}+\alpha p, \\
Wy^{(k+1)}=-\alpha T x^{(k+1)}+(1-\alpha)W y^{(k)}+ \alpha q,
\end{cases}
\end{equation}
\verb"where" $\alpha$ \verb"is a given positive constant".

\bigskip

In \cite{Salkuyeh1}, it has been shown that if $W$ and $T$ are symmetric positive definite and symmetric, respectively,
then the GSOR method is convergent.

Recently, using the matrix splitting
\[
A=\frac{1}{\alpha-i}\left[(\alpha W+T)-i(W-\alpha T)\right],
\]
Hezari et al. in \cite{hezari1} presented the Scale-Splitting (SCSP) iteration method for solving  (\ref{Eq1}) which can be described as follows.

\bigskip

\noindent {\bf The SCSP iteration method}: \verb"Let" $u^{(0)} \in  {\mathbb{C }^{n}}$ \verb"be an initial guess. For" $k=0,1,2,\ldots$, \verb"until" $\{u^{(k)}\}$ \verb"converges, compute" ${u^{(k+1)}}$  \verb"according to the" \verb"following sequence"

\begin{equation}\label{Eq06}
(\alpha W+T) u^{(k+1)}=i(W-\alpha T)u^{(k)}+ (\alpha-i)b,
\end{equation}
\noindent where $\alpha$ is a given positive constant.
\bigskip

It can be seen that the SCSP iteration method is equivalent to the matrix splitting iteration method induced by the splitting defined through the additive block diagonal (ABD) preconditioner introduced and discussed by Bai et al. in \cite{Bai5}.   At each iteration of the SCSP iteration method, it is required to solve a linear system with coefficient matrix $\alpha W+T$.  In \cite{hezari1} it was proved
that if $W$ and $T$ are symmetric positive semidefinite matrices satisfying ${\rm null}(W)\cap {\rm null}(T)=\{0\}$, then the SCSP iteration method
is convergent provided that
\[
  \left\{
    \begin{array}{ll}
      \displaystyle{\frac{1-\mu_{\min}}{1+\mu_{\min}}<\alpha <\frac{1+\mu_{\max}}{\mu_{\max}-1}}, & \quad \hbox{for $\mu_{\max}>1$}, \\
\\
      \displaystyle{\frac{1-\mu_{\min}}{1+\mu_{\min}}<\alpha}, & \quad \hbox{for $\mu_{\max}\leq1$,}
    \end{array}
  \right.
  \]
\noindent where $\mu_{\min}$ and $\mu_{\max}$ are the smallest and largest generalized eigenvalues of the matrix pair $(W,T)$, respectively. Recently, using the idea of the SCSP iteration method, Salkuyeh in \cite{Salkuyeh2} presented a two-step Scale-Splitting (TSCSP) for solving Eq. (\ref{Eq1}) which
is algorithmically described in the following form (see also \cite{ZZheng}).

\bigskip

\noindent {\bf The TSCSP iteration method}: \verb"Let" $u^{(0)} \in  {\mathbb{C }^{n}}$ \verb"be an initial guess. For" $k=0,1,2,\ldots$, \verb"until" $\{u^{(k)}\}$ \verb"converges, compute" ${u^{(k+1)}}$  \verb"according to the" \verb"following sequence"

\begin{equation}\label{Eq6}
\begin{cases}
(\alpha W+T) u^{(k+\frac{1}{2})}=i(W-\alpha T)u^{(k)}+ (\alpha-i)b,\\
( W+\alpha T) u^{(k+1)}=i(\alpha W-T)u^{(k+\frac{1}{2})}+ (1-\alpha i)b,
\end{cases}
\end{equation}
\noindent where $\alpha>0$.

\bigskip

Theoretical analysis in \cite{Salkuyeh2} indicate that if the matrices $W$ and $T$ are symmetric positive definite, then the
TSCSP iteration method  unconditionally converges. Numerical results presented in \cite{Salkuyeh2} show that the TSCSP iteration method
outperforms the PMHSS, the GSOR, the SCSP iteration methods. When $\alpha=1$, the TSCSP iteration method reduces to the ABD itetation method \cite{Bai5}. In this paper we present a two parameter TSCSP iteration method to solve the system (\ref{Eq1}) and analyze its convergence properties.

In the PMHSS, the GSOR, the SCSP and TSCSP iteration methods it is required to solve some subsystems with symmetric positive definite coefficient matrices.
These systems  can be solved exactly by using the Cholesky factorization of the coefficient matrices or inexactly by the conjugate gradient (CG) iteration method or its preconditioned version (PCG).

The remainder of the paper is organized as follows. In Section \ref{Sec2}, the TTSCSP iteration method is established and the convergence of the method is discussed. Inexact version of the TTSCSP method is studied in Section \ref{Sec3}.  Section \ref{Sec4} is devoted to some numerical experiments to show the effectiveness of TTSCSP. Finally, some concluding remarks are given  in Section \ref{Sec5}.
\section{The TTSCSP iteration method} \label{Sec2}

In this section, we derive a new version of the TSCSP iteration method that was initially proposed in \cite{Salkuyeh2}. The new method will be referred to as two-parameter TSCSP (TTSCSP) iteration method or, in brief, the TTSCSP iteration method.
To this end, let $\alpha>0$. By multiplying $(\alpha-i)$ through both sides of the complex system (\ref{Eq1}) we obtain the following equivalent system

\begin{equation}\label{Eq10}
(\alpha-i)Au=(\alpha-i)b,
\end{equation}
\noindent where $i=\sqrt{-1}$. The latter equation results in the following system of fixed-point equation

\begin{equation}\label{Eq11}
(\alpha W+T)u=i(W-\alpha T)u+(\alpha -i)b.
\end{equation}
Next, we multiply both sides of Eq. \eqref{Eq1} by $(1-\beta i)$ with $\beta>0$ to obtain the equivalent system
\begin{equation}\label{Eq12}
(1-\beta i)Au=(1-\beta i)b.
\end{equation}
\noindent It can be alternatively rewritten as the following system of fixed-point equations
\begin{equation}\label{Eq13}
(W+\beta T)u=i(\beta W-T)u+(1-\beta i)b.
\end{equation}
Now, by alternately iterating between the two systems of fixed-point equations (\ref{Eq11}) and (\ref{Eq13}), we can establish the following TTSCSP iteration method for solving the complex symmetric linear system (\ref{Eq1}).

\bigskip

\noindent {\bf The TTSCSP iteration method}: \verb"Let" $u^{(0)} \in  {\mathbb{C }^{n}}$ \verb"be an initial guess. For" $k=0,1,2,\ldots$, \verb"until" $\{u^{(k)}\}$ \verb"converges, compute" ${u^{(k+1)}}$  \verb"according to the" \verb"following sequence"
\begin{equation}\label{it}
\begin{cases}
(\alpha W+T) u^{(k+\frac{1}{2})}=i(W-\alpha T)u^{(k)}+ (\alpha-i)b,\\
( W+\beta T) u^{(k+1)}=i(\beta W-T)u^{(k+\frac{1}{2})}+ (1-\beta i)b,
\end{cases}
\end{equation}
 where $\alpha$ and $\beta$  are positive numbers.

\bigskip
It is mentioned that when $\alpha=\beta$, the TTSCSP method reduces to the TSCSP method.
The two subsystems of each iterate of the TTSCSP method require  to solve the systems with coefficient matrices $\alpha W+T$ and $W+\beta T$.  If $W$ and $T$ are  symmetric positive definite and  symmetric positive semidefinite, respectively, then coefficient matrix of two subsystems, $\alpha W+T$ and $W+\beta T$, are symmetric positive definite. Therefore, the two subsystems of iteration method can be exactly solved by Cholesky factorization. This is very costly and impractical for large real problems. To improve the computing efficiency of TTSCSP, we can inexactly solve the involving subsystems by CG or PCG.

The TTSCSP iteration method can be reformulated as the form
\begin{equation}\label{Eq14}
u^{(k+1)}=\mathcal{G}_{\alpha , \beta} u^{(k)} +\mathcal{C}_{\alpha , \beta} ,
\end{equation}
\noindent where
\[
\mathcal{G}_{\alpha , \beta}=(W+\beta T)^{-1}(T-\beta W)(\alpha W+T)^{-1}(W-\alpha T),
\]
and
\[
\mathcal{C}_{\alpha , \beta}=(\alpha+\beta)(W+\beta T)^{-1}(W-iT)(\alpha W+T)^{-1}b.
\]
Setting
\begin{eqnarray*}
M&\h=\h&\frac{1}{\alpha+\beta}(\alpha W+T)(W-i T)^{-1}(W+\beta T),\\
N&\h=\h&\frac{1}{\alpha+\beta}(T-\beta W)(W-i T)^{-1}(W-\alpha T),
\end{eqnarray*}
we have $A=M-N$ and $\mathcal{G}_{\alpha , \beta}={M}^{-1}N$. Therefore, the matrix
\begin{equation}\label{precondition}
Q= (\alpha W+T)(W-i T)^{-1}(W+\beta T),
\end{equation}
can be used as a preconditioner (TTSCSP preconditioner) for the system (\ref{Eq1}).

In the sequel, we prove that under suitable conditions, the TTSCSP iteration method converges to the unique solution of system (\ref{Eq1}). To establish the convergence of the TTSCSP iteration method, the following theorem is presented.

 \begin{thm}\label{Thm1}
Let $W\in  \mathbb{R }^{n \times n}$ be symmetric positive definite and $T\in \mathbb{R}^{n \times n}$ be symmetric positive semidefinite and
\[
0\leqslant \mu_1 \leqslant \cdots \leqslant \mu_r  <1 \leqslant \mu_{r+1}\leqslant \cdots \leqslant \mu_n,
\]
be  the eigenvalues of $S=W^{-\frac{1}{2}}TW^{-\frac{1}{2}}$.
Then, the TTSCSP iteration method is convergent, i.e., $\rho\left(\mathcal{G}_{\alpha , \beta}\right)<1$,  if $\alpha$ and $\beta$ satisfy
\begin{equation}\label{CondThm1}
 \frac{1-\mu_1}{1+\mu_1}<\alpha<\frac{\mu_n+1}{\mu_n-1}\quad and \quad \frac{\mu_n-1}{\mu_n+1}<\beta<\frac{1+\mu_1}{1-\mu_1}.
\end{equation}
\begin{proof}
Let
\[
{\hat{\mathcal{G}}}_{\alpha , \beta}=(I+\beta S)^{-1}(S-\beta I)(\alpha I+S)^{-1}(I-\alpha S),
\]
where $S=W^{-\frac{1}{2}}TW^{-\frac{1}{2}}$. It is easy to see that ${\mathcal{G}}_{\alpha , \beta}=W^{-\frac{1}{2}} {\hat{\mathcal{G}}}_{\alpha , \beta} W^{\frac{1}{2}}$. Hence, the matrices $\mathcal{G}_{\alpha , \beta}$ and ${\hat{\mathcal{G}}}_{\alpha , \beta}$ are similar and their eigenvalues are the same. Since the matrices $W$ and $T$ are symmetric positive definite and symmetric positive semidefinite, respectively, then the eigenvalues of $S$ are nonnegative. Therefore, we have
\begin{eqnarray*}
\rho(\mathcal{G}_{\alpha , \beta})&\h=\h&\rho({\hat{\mathcal{G}}}_{\alpha , \beta})\\
                                  &\h=\h& \rho\left((I+\beta S)^{-1}(S-\beta I)(\alpha I+S)^{-1}(I-\alpha S)\right)\\
                                  &\h=\h&\displaystyle \max_{\mu_{j}\in \sigma(S)} \left| \lambda(\alpha,\beta,\mu_j) \right|.
\end{eqnarray*}
where
\begin{equation}\label{Lambda1}
\lambda(\alpha,\beta,\mu_j)=\displaystyle  \frac{(\mu_{j}-\beta)(1-\alpha \mu_{j})}{(1+\beta \mu_{j})(\alpha + \mu_{j})}.
\end{equation}

\noindent Then, we have
\[
|\lambda(\alpha,\beta,\mu_j)|=\left| \frac{\mu_{j}-\beta}{1+\beta \mu_{j}} \right|
\left| \frac{1-\alpha \mu_{j}}{\alpha + \mu_{j}} \right|,
\]
and to get $|\lambda(\alpha,\beta,\mu_j)|<1$, it is enough to have
\begin{equation}\label{Ineq0}
\left| \frac{\mu_{j}-\beta}{1+\beta \mu_{j}} \right|<1 \quad  {\rm and} \quad
\left| \frac{1-\alpha \mu_{j}}{\alpha + \mu_{j}} \right|<1.
\end{equation}
The left inequality in (\ref{Ineq0}) is equivalent to the following inequalities
\begin{eqnarray}
\beta (1-\mu_j)&\h<\h& 1+\mu_j, \label{EQSS1}\\
\beta(\mu_j+1) &\h>\h& \mu_j-1.   \label{EQSS2}
\end{eqnarray}
For $\mu_j\geqslant 1$, these inequalities hold true if and only if
\begin{equation}\label{Ineq1}
\beta>\frac{\mu_j-1}{\mu_j+1},
\end{equation}
and for $0<\mu_j<1$, the inequalities (\ref{EQSS1}) and (\ref{EQSS2}) hold if and only if
\begin{equation}\label{Ineq2}
\beta<\frac{1+\mu_j}{1-\mu_j}.
\end{equation}
Hence, from Eqs. (\ref{Ineq1}) and (\ref{Ineq2}) it is enough to set
\[
\frac{\mu_n-1}{\mu_n+1}=\max_{\mu_j\geqslant 1}\frac{\mu_j-1}{\mu_j+1}<\beta<\min_{\mu_j<1}\frac{1+\mu_j}{1-\mu_j}=\frac{1+\mu_1}{1-\mu_1}.
\]
In a similar manner the condition for the parameter $\alpha$ can be obtained. It should be mentioned that if $\mu_1=0$ then form (\ref{CondThm1}) it can be deduced that $\beta<\alpha$. Therefore, if $\mu_j=0$ for some $j$, then
\[
|\lambda(\alpha,\beta,\mu_j)|=|\lambda(\alpha,\beta,0)|=|\frac{\beta}{\alpha}|<1.
\]
\end{proof}
\end{thm}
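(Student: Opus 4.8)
The plan is to strip the iteration matrix down to a scalar rational function on the spectrum of $S$. Since $W$ is symmetric positive definite, $W^{1/2}$ is invertible, and a direct algebraic manipulation of the definition of $\mathcal{G}_{\alpha,\beta}$ shows that $\mathcal{G}_{\alpha,\beta}=W^{-1/2}\hat{\mathcal{G}}_{\alpha,\beta}W^{1/2}$ with $\hat{\mathcal{G}}_{\alpha,\beta}=(I+\beta S)^{-1}(S-\beta I)(\alpha I+S)^{-1}(I-\alpha S)$ and $S=W^{-1/2}TW^{-1/2}$; hence the two matrices are similar and $\rho(\mathcal{G}_{\alpha,\beta})=\rho(\hat{\mathcal{G}}_{\alpha,\beta})$. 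Because $T$ is symmetric positive semidefinite and $W$ symmetric positive definite, all eigenvalues $\mu_j$ of $S$ are nonnegative, so the factors $\alpha I+S$ and $I+\beta S$ are nonsingular (their eigenvalues $\alpha+\mu_j>0$ and $1+\beta\mu_j>0$), and every factor of $\hat{\mathcal{G}}_{\alpha,\beta}$ is a rational function of the single symmetric matrix $S$. Diagonalizing $S$ in an orthonormal eigenbasis, the eigenvalues of $\hat{\mathcal{G}}_{\alpha,\beta}$ are precisely the scalars $\lambda(\alpha,\beta,\mu_j)=\frac{(\mu_j-\beta)(1-\alpha\mu_j)}{(1+\beta\mu_j)(\alpha+\mu_j)}$, and therefore $\rho(\mathcal{G}_{\alpha,\beta})=\max_j|\lambda(\alpha,\beta,\mu_j)|$.

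The next step is to force $|\lambda(\alpha,\beta,\mu_j)|<1$ for every $j$ by controlling the two factors $\left|\frac{\mu_j-\beta}{1+\beta\mu_j}\right|$ and $\left|\frac{1-\alpha\mu_j}{\alpha+\mu_j}\right|$ separately, making each strictly less than one; this is a sufficient (not necessary) reduction, which is all the theorem asserts. For the first factor, clearing the positive denominator $1+\beta\mu_j$ turns $\left|\frac{\mu_j-\beta}{1+\beta\mu_j}\right|<1$ into the two linear inequalities $\beta(1-\mu_j)<1+\mu_j$ and $\beta(\mu_j+1)>\mu_j-1$; one of these is automatic and the other nontrivial according to whether $\mu_j<1$ or $\mu_j\geq1$, giving $\beta<\frac{1+\mu_j}{1-\mu_j}$ in the first regime and $\beta>\frac{\mu_j-1}{\mu_j+1}$ in the second. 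Using the monotonicity of $t\mapsto\frac{t-1}{t+1}$ (increasing) and $t\mapsto\frac{1+t}{1-t}$ (increasing on $[0,1)$), the binding constraints come from $\mu_n$ and $\mu_1$, yielding $\frac{\mu_n-1}{\mu_n+1}<\beta<\frac{1+\mu_1}{1-\mu_1}$. By the same reasoning applied to the second factor, now with $\alpha$ playing against $1-\alpha\mu_j$ and $\alpha+\mu_j$ and using that $t\mapsto\frac{1-t}{1+t}$ and $t\mapsto\frac{1+t}{t-1}$ are decreasing, one obtains $\frac{1-\mu_1}{1+\mu_1}<\alpha<\frac{\mu_n+1}{\mu_n-1}$, which is exactly \eqref{CondThm1}.

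Finally I would treat the degenerate eigenvalue $\mu_j=0$, which may occur since $T$ is only positive semidefinite: there $\lambda(\alpha,\beta,0)=-\beta/\alpha$, and since \eqref{CondThm1} forces $\alpha>1>\beta$ one gets $|\lambda(\alpha,\beta,0)|=\beta/\alpha<1$ directly. The main obstacle here is not any deep estimate but the bookkeeping: keeping the two-sided absolute-value inequalities and the $\mu_j\lessgtr1$ case split straight, and noticing that the factorwise bound is only sufficient, so the borderline eigenvalue $\mu_j=0$ has to be handled on its own rather than through the (non-tight) factorization.
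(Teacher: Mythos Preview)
Your proposal is correct and follows essentially the same route as the paper: conjugate by $W^{1/2}$ to reduce $\mathcal{G}_{\alpha,\beta}$ to a rational function of the symmetric matrix $S$, read off the eigenvalues $\lambda(\alpha,\beta,\mu_j)$, bound the two factors separately via the same $\mu_j\lessgtr 1$ case split, and then dispatch $\mu_j=0$ using $\alpha>1>\beta$ (equivalently $\beta<\alpha$), which is exactly what the authors do. Your explicit monotonicity remarks for identifying the extremal $\mu_j$ are a slight expansion of what the paper leaves implicit, but there is no substantive difference in strategy.
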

\begin{rem}\label{1}
 If $0\leqslant \mu_1 \leqslant \cdots \leqslant \mu_n  \leqslant 1$, then from Eqs. (\ref{EQSS1}) and (\ref{EQSS2}) the sufficient conditions for the convergence of the TTSCSP iteration method reduce to
\begin{equation*}
 \alpha>\frac{1-\mu_1}{1+\mu_1} \quad \textrm{and} \quad \beta<\frac{1+\mu_1}{1-\mu_1},
\end{equation*}
and if $1\leqslant \mu_1 \leqslant \cdots \leqslant \mu_n $, then again using (\ref{EQSS1}) and (\ref{EQSS2}) the sufficient convergence conditions reduce to
\begin{equation*}
\alpha<\frac{\mu_n+1}{\mu_n-1}\quad \textrm{and} \quad \beta>\frac{\mu_n-1}{\mu_n+1}.
\end{equation*}
\end{rem}
In general it is difficult to find the optimal values of the parameters $\alpha$ and $\beta$. In the sequel, we obtain the parameters which minimize the upper bound $\rho(\mathcal{G}_{\alpha , \beta})$. To do so, it follows from  Eq. (\ref{Lambda1}) that
\begin{eqnarray*}
\rho(\mathcal{G}_{\alpha , \beta}) &\h=\h& \max_{\mu_{j}\in \sigma(S)}|\lambda(\alpha,\beta,\mu_j)|=\displaystyle\max_{\mu_{j}\in \sigma(S)}\left|  \frac{(\mu_{j}-\beta)(1-\alpha \mu_{j})}{(1+\beta \mu_{j})(\alpha + \mu_{j})}\right|\\
&\h\leqslant\h& \max_{\mu_{j}\in \sigma(S)} \left | \frac{\mu_{j}-\beta}{1+\beta \mu_{j}} \right| .  \max_{\mu_{j}\in \sigma(S)} \left| \frac{1-\alpha \mu_{j}}{\alpha +\mu_{j}}  \right|=:\sigma {(\alpha , \beta)}.
\end{eqnarray*}
The next theorem presents the parameters $\alpha$ and $\beta$ which minimize $\sigma {(\alpha , \beta)}$.

\begin{thm}\label{Thm2}
Let all the assumptions of Theorem \ref{Thm1} hold and
\[
 (\alpha^{*},\beta^{*})= \argmin_{\hspace{-0.3cm}\alpha,\beta>0}  \sigma {(\alpha , \beta)}.
\]
 Then
\[
\alpha^{*} =\frac{\gamma+\sqrt{\gamma^2+\eta^2}}{\eta} \quad {\rm and} \quad \beta^{*}=\frac{1}{\alpha^*},
\]
where $\eta=\mu_1+\mu_n$ and $\gamma=1-\mu_1\mu_n$.
\begin{proof}
Let
\[
f_{\mu}(\alpha)=\frac{1-\alpha \mu}{\alpha +\mu},  \quad  {\rm and}  \quad   g_{\mu}(\beta)=\frac{\mu-\beta}{1+\beta \mu}.
\]
Obviously, we have
\begin{eqnarray*}
(\alpha^{*},\beta^{*}) &\h=\h& \argmin_{\hspace{-0.2cm}\alpha,\beta>0} \left\{\max_{\mu \in \sigma(S)}|f_{\mu}(\alpha)|.\max_{\mu \in \sigma(S)} |g_{\mu}(\beta)|\right\}\\
&\h=\h& \left(\argmin_{\hspace{-0.2cm}\alpha>0} \max_{\mu \in \sigma(S)}|f_{\mu}(\alpha)|,\argmin_{\hspace{-0.2cm}\beta>0} \max_{\mu \in \sigma(S)} |g_{\mu}(\beta)|\right).
\end{eqnarray*}
Therefore, we can independently obtain the values of $\alpha^*$ and $\beta^*$ via
\[
\alpha^{*}= \argmin_{\hspace{-0.3cm}\alpha>0} \max_{\mu \in \sigma(S)} | f_{\mu}(\alpha) | \hspace{0.3cm}  \textrm{and}  \hspace{0.3cm} \beta^{*}= \argmin_{\hspace{-0.3cm}\beta>0} \hspace{0.2cm} \max_{\mu \in \sigma(S)} |g_{\mu}(\beta) |.
\]
To compute the values of $\alpha^*$ and $\beta^*$, we first study some properties of the function $f_{\mu}(\alpha)$. This function passes through the points $(0,1/\mu)$ and $(1/\mu,0)$ and has two asymptotes $\alpha=-\mu$ and $y=-\mu$. We have
\[
\frac{d}{d\alpha} f_{\mu}(\alpha) = -\frac{1+{\mu}^2}{(\alpha +\mu)^2}<0,
\]
which shows that the function $f_{\mu}(\alpha)$ is strictly decreasing. Figure  1 displays the function $ |f_{\mu}(\alpha) |$ for $\mu=\mu_1,\mu_2,\mu_3$, where $\mu_{1}<\mu_2<\mu_3$. As seen the optimal values of $\alpha$ are obtained by intersecting the functions $|f_{\mu_1}(\alpha)|$ and $|f_{\mu_3}(\alpha) |$. Therefore, in general $\alpha^{*}$ satisfies the relation
 \[
 \frac{1-\alpha^* \mu_1}{\alpha^*+\mu_1}=-\frac{1-\alpha^* \mu_n}{\alpha^*+\mu_n},
 \]
which gives the following two values for $\alpha^*$,
\begin{eqnarray*}
 \alpha^*_1 &=&\frac{1-\mu_{1}\mu_{n}+\sqrt{(1-\mu_1\mu_n)^2+(\mu_1+\mu_n)^2}}{\mu_{1} + \mu_{n}}, \\
  \alpha^*_2 &=&\frac{1-\mu_{1}\mu_{n}-\sqrt{(1-\mu_1\mu_n)^2+(\mu_1+\mu_n)^2}}{\mu_{1} + \mu_{n}}
\end{eqnarray*}
Since, $\alpha^*_2$  is not  positive we deduce that $\alpha^*=\alpha_1^*$. In a similar way, the optimum value of the parameter $\beta$ can be found.
\begin{figure}\label{Fig1}
\centering
\includegraphics[height=8cm,width=12cm]{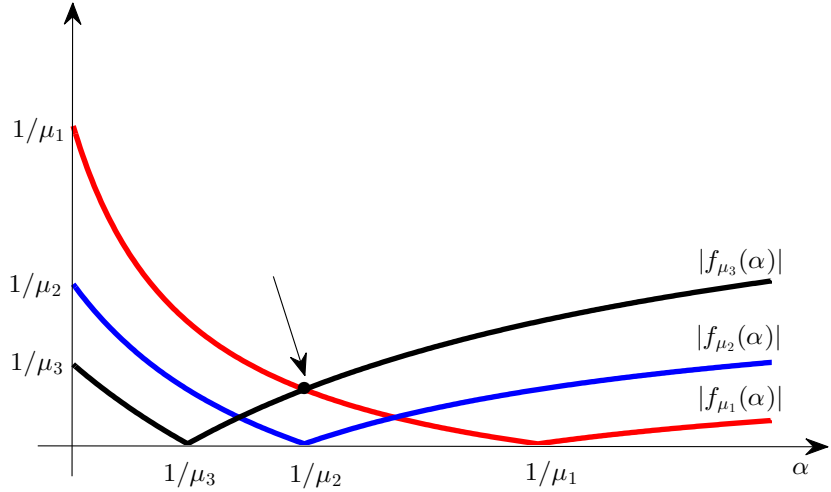}
\caption{Graph of $f_{\mu}(\alpha)$ for $\mu=\mu_1,\mu_2,\mu_3$ where $\mu_1<\mu_2<\mu_3$.}
\end{figure}
\end{proof}
\end{thm}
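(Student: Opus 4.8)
The plan is to exploit the product structure of the bound $\sigma(\alpha,\beta)$. Writing $f_\mu(\alpha)=(1-\alpha\mu)/(\alpha+\mu)$ and $g_\mu(\beta)=(\mu-\beta)/(1+\beta\mu)$, we have $\sigma(\alpha,\beta)=\phi(\alpha)\,\psi(\beta)$ where
\[
\phi(\alpha)=\max_{\mu\in\sigma(S)}|f_\mu(\alpha)|,\qquad \psi(\beta)=\max_{\mu\in\sigma(S)}|g_\mu(\beta)|
\]
are nonnegative. Since a product of two nonnegative functions of independent variables is minimized exactly by minimizing each factor separately, it suffices to treat the two one-dimensional problems $\alpha^{*}=\argmin_{\alpha>0}\phi(\alpha)$ and $\beta^{*}=\argmin_{\beta>0}\psi(\beta)$ on their own.

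For the first, I would start by reducing the inner maximum to the two extreme eigenvalues. Since $\partial f_\mu(\alpha)/\partial\mu=-(1+\alpha^{2})/(\alpha+\mu)^{2}<0$, for fixed $\alpha>0$ the map $\mu\mapsto f_\mu(\alpha)$ is strictly decreasing, so its values over $\sigma(S)$ lie in $[\,f_{\mu_n}(\alpha),f_{\mu_1}(\alpha)\,]$ with both ends attained, and hence $\phi(\alpha)=\max\{|f_{\mu_1}(\alpha)|,|f_{\mu_n}(\alpha)|\}$. Combining this with the elementary properties of $f_\mu$ (strictly decreasing in $\alpha$, equal to $1/\mu>0$ at $\alpha=0$, a single zero at $\alpha=1/\mu$, horizontal asymptote $-\mu$), I would check that $\phi$ is strictly decreasing on $(0,1/\mu_n]$ (there $f_{\mu_1}(\alpha)\ge f_{\mu_n}(\alpha)\ge0$, so $\phi=f_{\mu_1}$), strictly increasing on $[1/\mu_1,\infty)$ (there $0\ge f_{\mu_1}(\alpha)\ge f_{\mu_n}(\alpha)$, so $\phi=-f_{\mu_n}$), and on the middle interval $[1/\mu_n,1/\mu_1]$ equal to $\max\{f_{\mu_1}(\alpha),-f_{\mu_n}(\alpha)\}$, a maximum of a strictly decreasing and a strictly increasing nonnegative function. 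Thus $\phi$ is unimodal with a unique minimizer, attained at the unique $\alpha>0$ with $f_{\mu_1}(\alpha)=-f_{\mu_n}(\alpha)$, that is,
\[
\frac{1-\alpha\mu_1}{\alpha+\mu_1}=-\,\frac{1-\alpha\mu_n}{\alpha+\mu_n}.
\]
Clearing denominators turns this into $\eta\alpha^{2}-2\gamma\alpha-\eta=0$ with $\eta=\mu_1+\mu_n$, $\gamma=1-\mu_1\mu_n$; since $\eta>0$ and $\sqrt{\gamma^{2}+\eta^{2}}>|\gamma|$, exactly one root is positive, which gives $\alpha^{*}=(\gamma+\sqrt{\gamma^{2}+\eta^{2}})/\eta$.

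For $\beta^{*}$ I would avoid repeating the argument by using the identity $g_\mu(\beta)=(\mu-\beta)/(1+\beta\mu)=-f_\mu(1/\beta)$, valid for all $\mu\ge0$ and $\beta>0$, so that $\psi(\beta)=\phi(1/\beta)$; since $\beta\mapsto1/\beta$ maps $(0,\infty)$ onto itself, the minimizer of $\psi$ is $\beta^{*}=1/\alpha^{*}$, as claimed. I expect the main obstacle to be the careful proof of the unimodality of $\phi$ — especially on the middle interval $[1/\mu_n,1/\mu_1]$, where $f_{\mu_1}$ and $f_{\mu_n}$ carry opposite signs — together with the degenerate cases $\mu_1=\mu_n$ (where $\phi$ attains its minimum value $0$ at $\alpha^{*}=1/\mu_1$) and $\mu_1=0$ (where $f_0(\alpha)=1/\alpha$ has no interior zero); in each of these one checks directly that the same quadratic and the relation $\beta^{*}=1/\alpha^{*}$ persist. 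Everything after the extremal structure is pinned down — solving the quadratic and substituting $\beta=1/\alpha$ — is routine.
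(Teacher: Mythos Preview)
Your argument is correct and follows the same overall strategy as the paper: separate $\sigma(\alpha,\beta)$ into independent factors, reduce the inner maximum to the extreme eigenvalues $\mu_1,\mu_n$, and solve the equioscillation condition $f_{\mu_1}(\alpha)=-f_{\mu_n}(\alpha)$ to obtain the quadratic $\eta\alpha^{2}-2\gamma\alpha-\eta=0$.

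Where you differ is in rigor and in the treatment of $\beta^{*}$. The paper justifies the reduction to $\mu_1,\mu_n$ and the location of the minimizer by appealing to a figure; you supply the monotonicity argument in $\mu$ (via $\partial f_\mu/\partial\mu<0$) and an explicit unimodality analysis of $\phi$ on the three subintervals, which makes the equioscillation step airtight rather than pictorial. For $\beta^{*}$, the paper simply writes ``in a similar way,'' whereas your identity $g_\mu(\beta)=-f_\mu(1/\beta)$ yields $\psi(\beta)=\phi(1/\beta)$ and hence $\beta^{*}=1/\alpha^{*}$ in one line, avoiding a repeat computation. You also flag the degenerate cases $\mu_1=\mu_n$ and $\mu_1=0$, which the paper does not address. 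None of this changes the route, but it does tighten it.
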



Other sufficient conditions for the convergence of the TTSCSP iteration are given in the next theorem.

 \begin{thm}\label{Thm3}
Let $W\in  \mathbb{R }^{n \times n}$ be symmetric positive definite and $T\in \mathbb{R}^{n \times n}$ be symmetric positive semidefinite and
$ 0=\mu_1=\cdots=\mu_{s-1} < \mu_s \leqslant  \cdots \leqslant \mu_n$ be  the eigenvalues of $S=W^{-\frac{1}{2}}TW^{-\frac{1}{2}}$.
Then, the TTSCSP iteration method is convergent if $\alpha$ and $\beta$ satisfy
\begin{equation}\label{CondThm1}
0<\beta<\alpha,\quad \alpha>\frac{1}{2}(\frac{1}{\mu_s}-\mu_s)\quad and \quad \beta>\frac{1}{2}(\mu_n-\frac{1}{\mu_n})
\end{equation}
\end{thm}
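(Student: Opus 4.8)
The plan is to recycle the diagonalization from the proof of Theorem~\ref{Thm1}. Since $\mathcal{G}_{\alpha,\beta}$ is similar to $\hat{\mathcal{G}}_{\alpha,\beta}$, one has
\[
\rho(\mathcal{G}_{\alpha,\beta})=\max_{\mu_j\in\sigma(S)}\left|\lambda(\alpha,\beta,\mu_j)\right|,\qquad
\lambda(\alpha,\beta,\mu_j)=\frac{(\mu_j-\beta)(1-\alpha\mu_j)}{(1+\beta\mu_j)(\alpha+\mu_j)},
\]
with every $\mu_j\geqslant 0$ real, so it suffices to show $|\lambda(\alpha,\beta,\mu_j)|<1$ for each eigenvalue $\mu_j$ of $S$. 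I would split according to whether $\mu_j=0$ or $\mu_j\geqslant\mu_s>0$. For the $s-1$ zero eigenvalues the quantity collapses to $\lambda(\alpha,\beta,0)=-\beta/\alpha$, so $|\lambda(\alpha,\beta,0)|=\beta/\alpha<1$ exactly because $0<\beta<\alpha$.

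For a nonzero eigenvalue $\mu:=\mu_j$ I would use the factorization already appearing in Theorem~\ref{Thm2}, $\lambda(\alpha,\beta,\mu)=f_\mu(\alpha)\,g_\mu(\beta)$ with $f_\mu(\alpha)=(1-\alpha\mu)/(\alpha+\mu)$ and $g_\mu(\beta)=(\mu-\beta)/(1+\beta\mu)$, and prove the two one-sided estimates
\[
|f_\mu(\alpha)|<\mu\qquad\text{and}\qquad |g_\mu(\beta)|<\tfrac1\mu ,
\]
whose product gives $|\lambda(\alpha,\beta,\mu)|<1$. For the first: $f_\mu(\alpha)+\mu=(1+\mu^2)/(\alpha+\mu)>0$ for every $\alpha>0$, so $f_\mu(\alpha)>-\mu$ automatically; and, multiplying by the positive number $\alpha+\mu$, the inequality $f_\mu(\alpha)<\mu$ is equivalent to $\alpha>\tfrac12(\tfrac1\mu-\mu)$. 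Since $\mu\mapsto\tfrac12(\tfrac1\mu-\mu)$ is strictly decreasing on $(0,\infty)$, the hypothesis $\alpha>\tfrac12(\tfrac1{\mu_s}-\mu_s)$ already forces $\alpha>\tfrac12(\tfrac1\mu-\mu)$ for all $\mu\geqslant\mu_s$, whence $|f_\mu(\alpha)|<\mu$.

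Symmetrically, $g_\mu(\beta)+\tfrac1\mu=(1+\mu^2)/\big(\mu(1+\beta\mu)\big)>0$, so $g_\mu(\beta)>-\tfrac1\mu$ automatically, while $g_\mu(\beta)<\tfrac1\mu$ is equivalent, after multiplying by the positive $\mu(1+\beta\mu)$, to $\beta>\tfrac12(\mu-\tfrac1\mu)$. As $\mu\mapsto\tfrac12(\mu-\tfrac1\mu)$ is strictly increasing, the hypothesis $\beta>\tfrac12(\mu_n-\tfrac1{\mu_n})$ gives $\beta>\tfrac12(\mu-\tfrac1\mu)$ for all $\mu\leqslant\mu_n$, so $|g_\mu(\beta)|<\tfrac1\mu$. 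Multiplying the two bounds yields $|\lambda(\alpha,\beta,\mu_j)|<1$ for the nonzero eigenvalues too, and therefore $\rho(\mathcal{G}_{\alpha,\beta})<1$.

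There is no genuinely hard step here: everything reduces to the two cross-multiplications "$f_\mu(\alpha)<\mu\Leftrightarrow\alpha>\tfrac12(\tfrac1\mu-\mu)$" and "$g_\mu(\beta)<\tfrac1\mu\Leftrightarrow\beta>\tfrac12(\mu-\tfrac1\mu)$" (legitimate because $\alpha+\mu>0$ and $1+\beta\mu>0$), together with the two free lower bounds $f_\mu>-\mu$ and $g_\mu>-\tfrac1\mu$, and then propagating the two single-eigenvalue conditions to all of $\sigma(S)$ by the monotonicity of $\mu\mapsto\tfrac12(\tfrac1\mu-\mu)$ and $\mu\mapsto\tfrac12(\mu-\tfrac1\mu)$. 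The only point requiring care is that the product bound $|f_\mu|\,|g_\mu|<\mu\cdot\tfrac1\mu$ degenerates as $\mu\to0$ (the factor $1/\mu$ blows up), which is precisely why the zero eigenvalues have to be peeled off first and handled separately through $\beta<\alpha$. I would also remark that, once $\beta<\alpha$ and $\beta>\tfrac12(\mu_n-\tfrac1{\mu_n})$ are in force, a direct expansion of $|\lambda(\alpha,\beta,\mu)|<1$ already succeeds (it reduces to $(\alpha-\beta)(\mu^2-1)<2(1+\alpha\beta)\mu$, which is immediate for $\mu\leqslant1$ and follows from $\mu^2-1<2\beta\mu$ and $\alpha>\beta$ for $\mu>1$), so the condition on $\mu_s$ is not strictly indispensable; keeping it, however, makes the estimate symmetric in the factors $f_\mu$ and $g_\mu$.
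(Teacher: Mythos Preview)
Your argument is correct and is essentially the paper's own proof: both split off the zero eigenvalues via $\beta/\alpha<1$ and, for $\mu_j>0$, bound the two factors of $|\lambda(\alpha,\beta,\mu_j)|$ separately---your inequalities $|f_\mu(\alpha)|<\mu$ and $|g_\mu(\beta)|<1/\mu$ are algebraically identical to the paper's $\dfrac{|\alpha-1/\mu_j|}{\alpha+\mu_j}<1$ and $\dfrac{|\beta-\mu_j|}{\beta+1/\mu_j}<1$, and both then invoke the monotonicity of $t\mapsto\tfrac12(1/t-t)$ (resp.\ $t\mapsto\tfrac12(t-1/t)$) to reduce to the extreme eigenvalues $\mu_s$ and $\mu_n$. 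Your closing observation that the $\mu_s$-condition is in fact redundant (convergence already following from $0<\beta<\alpha$ together with $\beta>\tfrac12(\mu_n-1/\mu_n)$) is correct and goes slightly beyond what the paper states.
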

\begin{proof}
Similar to Theorem \ref{Thm1} we consider the two case $\mu_j=0$ and $\mu_j\neq 0$.
If $\mu_j=0$, then $|\lambda(\alpha,\beta,\mu_j)|=\beta/\alpha$. Therefore, a necessary condition for the convergence of the TTSCSP iteration method is $\beta<\alpha$.

Now, we assume that $\mu_j\neq 0$. In this case, we have
\[
|\lambda(\alpha,\beta,\mu_j)|=\left| \frac{ (\beta-\mu_{j})(\alpha -\frac{1}{\mu_{j}}) }  {(\beta
+\frac{1}{\mu_{j}})(\alpha+\mu_{j})}\right| =   \frac{|\beta-\mu_{j}|}{\beta
+\frac{1}{\mu_{j}}}   .  \frac{|\alpha -\frac{1}{\mu_{j}}|}{\alpha+\mu_{j} }
\]
and to get $|\lambda(\alpha,\beta,\mu_j)|<1$, it is enough to have
\begin{equation}\label{31}
\frac{|\alpha -\frac{1}{\mu_{j}} |}{\alpha+\mu_{j}}<1 \quad  {\rm and} \quad \frac{|\beta-\mu_j|}{\beta
+\frac{1}{\mu_{j}}}<1.
\end{equation}
The left inequality in \eqref{31} holds if and only if
$$
\alpha>\frac{1}{2}(\frac{1}{\mu_j}-\mu_j):= h(\mu_j).
$$
Since, the function $h(t)$ for $t\geqslant 0$ is a decreasing, this inequality holds true if we choose the parameter $\alpha$ such a way that
\begin{equation}\label{33}
\alpha>\frac{1}{2}(\frac{1}{\mu_s}-\mu_s).
\end{equation}
Similarly, the right inequality in \eqref{31} holds if we choose the parameter $\beta$ from the relation
\begin{equation}\label{35}
\beta>\frac{1}{2}(\mu_n - \frac{1}{\mu_n}).
\end{equation}
Therefore proof is complete.
\end{proof}
\begin{cor}\label{Cor1}
Let both of the matrices $W$ and $T$ be symmetric positive definite. Then, the TTSCSP iteration method is convergent if $\alpha>0$ and $\beta>0$ satisfy
\[
\alpha>\frac{1}{2}(\frac{1}{\mu_1}-\mu_1)\quad and \quad \beta>\frac{1}{2}(\mu_n-\frac{1}{\mu_n}).
\]
\begin{proof}
Since, both of the matrices $W$ and $T$ are symmetric positive definite, we deduce that the matrix $S$ is symmetric positive definite and as a result we have $\mu_1>0$. Therefore, from the proof of Theorem \ref{Thm1} the desired result is obtained.
\end{proof}
\end{cor}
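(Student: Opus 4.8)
The plan is to obtain the corollary as an immediate specialisation of Theorem~\ref{Thm3} to the situation in which $S=W^{-1/2}TW^{-1/2}$ has no zero eigenvalue. First I would record that if both $W$ and $T$ are symmetric positive definite, then $S$, being a congruence transform of the symmetric positive definite matrix $T$, is itself symmetric positive definite; hence all its eigenvalues are strictly positive, and in particular $\mu_1>0$. Consequently, in the notation of Theorem~\ref{Thm3} there is no leading block of vanishing eigenvalues, so $s=1$ and $\mu_s=\mu_1$, and the threshold $\frac{1}{2}\!\left(\frac{1}{\mu_s}-\mu_s\right)$ collapses to $\frac{1}{2}\!\left(\frac{1}{\mu_1}-\mu_1\right)$.

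Next I would examine which part of the hypothesis set of Theorem~\ref{Thm3} remains genuinely needed. The constraint $0<\beta<\alpha$ there entered \emph{only} to control the eigenvalue-$0$ contribution $|\lambda(\alpha,\beta,\mu_j)|=\beta/\alpha$; since no zero eigenvalue occurs under the present hypotheses, that constraint is vacuous and can be discarded. For every $\mu_j\in\sigma(S)$ (all strictly positive) the factorisation employed in \eqref{31} continues to hold,
\[
|\lambda(\alpha,\beta,\mu_j)|=\frac{\left|\alpha-\frac{1}{\mu_j}\right|}{\alpha+\mu_j}\cdot\frac{|\beta-\mu_j|}{\beta+\frac{1}{\mu_j}},
\]
and, exactly as in \eqref{33} and \eqref{35}, each of the two factors is strictly below $1$ provided $\alpha>\frac{1}{2}\!\left(\frac{1}{\mu_j}-\mu_j\right)$ and $\beta>\frac{1}{2}\!\left(\mu_j-\frac{1}{\mu_j}\right)$ (the denominators are positive because $\alpha,\beta>0$ and $\mu_j>0$). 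Invoking monotonicity --- $h(t)=\frac{1}{2}(1/t-t)$ is decreasing and $t\mapsto\frac{1}{2}(t-1/t)$ is increasing on $(0,\infty)$ --- these two requirements hold for every $\mu_j\in[\mu_1,\mu_n]$ once $\alpha>\frac{1}{2}\!\left(\frac{1}{\mu_1}-\mu_1\right)$ and $\beta>\frac{1}{2}\!\left(\mu_n-\frac{1}{\mu_n}\right)$. Taking the maximum over $\sigma(S)$ and using the similarity $\mathcal{G}_{\alpha,\beta}=W^{-1/2}\hat{\mathcal{G}}_{\alpha,\beta}W^{1/2}$ from the proof of Theorem~\ref{Thm1} then gives $\rho(\mathcal{G}_{\alpha,\beta})=\max_{\mu_j\in\sigma(S)}|\lambda(\alpha,\beta,\mu_j)|<1$, which is the asserted convergence.

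I do not expect a substantive obstacle: the whole argument is a verbatim restriction of the proof of Theorem~\ref{Thm3} to strictly positive eigenvalues. The one point that deserves a sentence of care is precisely that the ``$\beta<\alpha$'' condition of Theorem~\ref{Thm3} is an artefact of the singular case $\mu_j=0$ and becomes superfluous once $T$ (hence $S$) is positive definite; apart from that, only the elementary positive-definiteness fact about congruence transforms and the two monotonicity observations are used.
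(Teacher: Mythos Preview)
Your proposal is correct and follows essentially the same approach as the paper: specialise the proof of Theorem~\ref{Thm3} to the case $\mu_1>0$, whereupon the constraint $\beta<\alpha$ (which served only to handle the $\mu_j=0$ contribution) becomes superfluous and the remaining conditions reduce, via $s=1$, to the stated ones. The paper's own proof is a one-line pointer (citing Theorem~\ref{Thm1}, evidently a misprint for Theorem~\ref{Thm3}), and your write-up simply spells out the implicit steps.
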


\section{Inexact TTSCSP}\label{Sec3}

For computing $u^{(k+1)}$ from \eqref{it}, we should solve two subsystems with the coefficient matrices $\alpha W+T$ and $W+\beta T$, which are very costly. To improve the implementation of the TTSCSP iteration method, we can employ an iteration method for solving the two subproblems. Since $\alpha W+T$ and $W+\beta T$ are positive definite, we can solve the two subsystems by CG.

In this section, we study inexact version of the TTSCSP (ITTSCSP) iteration method where the subsystems are solved inexactly by the CG method. The subsystems involving the TTSCSP iteration method are solved by the  PCG method such that the relative residual norms are less than $\epsilon_k>0$ and $\eta_k>0$, respectively.
To do so, letting $${\bar{u}}^{(k+\frac{1}{2})}={\bar{u}}^{(k)}+{\bar{z}}^{(k)},$$ and then substituting it in the first subsystem yields
\begin{equation}\label{Eq010}
(\alpha W+T) {\bar{z}}^{(k)}=(\alpha-i)r^{(k)},
\end{equation}
where $r^{(k)}=b-Au^{(k)}$. In the same way, letting $${\bar{u}}^{(k+1)}={\bar{u}}^{(k+\frac{1}{2})}+{\bar{z}}^{(k+\frac{1}{2})},$$ the second subsystem can be written as
\begin{equation}\label{Eq011}
(W+\beta T){\bar{z}}^{(k+\frac{1}{2})}=(1-\beta i)r^{(k+\frac{1}{2})},
\end{equation}
where  $r^{(k+\frac{1}{2})}=b-Au^{(k+\frac{1}{2})}$.
In the ITTSCSP algorithm, we inexactly solve systems (\ref{Eq010}) and (\ref{Eq011}) by the CG method. The resulting algorithm
is summarized as follows.

\bigskip

\indent {\bf The Inexact TTSCSP (ITTSCSP) iteration method}\vspace{-0.2cm}
\begin{enumerate}
  \item \verb"Choose an initial guess" $u^{(0)}$ \verb"and compute" $r^{(0)}=b-Au^{(0)}$ \\[-0.68cm]
  \item \verb"For" $k=0,1,2,\ldots$ \verb"until convergence, Do"\\[-0.68cm]
  \item \qquad \verb"Compute" $r^{(k)}=b-Au^{(k)}$ \verb"and set" $\bar{r}^{(k)}=(\alpha-i)r^{(k)}$\\[-0.68cm]
  \item \qquad \verb"Solve" $(\alpha W+T){\bar{z}}^{(k)}=\bar{r}^{(k)}$ \verb"by the CG method to compute"\\[-0.5cm]

   ~\hspace{0.68cm}\verb"the approximate solution" ${\bar{z}}^{k}$ \verb"satisfying" $\|\bar{r}^{(k)}-(\alpha W+T){\bar{z}}^{(k)}\|_2\leqslant \epsilon_k \|\bar{r}^{(k)}\|_2$ \\[-0.6cm]
  \item \qquad  $u^{(k+\frac{1}{2})}:=u^{(k)}+\bar{z}^{(k)}$
  \item \qquad  \verb"Compute" $r^{(k+\frac{1}{2})}=b-Au^{(k+\frac{1}{2})}$ \verb"and set" $\bar{r}^{(k+\frac{1}{2})}=(1-\beta i)r^{(k+\frac{1}{2})}$\\[-0.68cm]
  \item \qquad \verb"Solve" $(\alpha W+T){\bar{z}}^{(k+\frac{1}{2})}=\bar{r}^{(k+\frac{1}{2})}$ \verb"by the CG method to compute the"\\[-0.5cm]

   ~\hspace{0.68cm}\verb"approximate solution" ${\bar{z}}^{(k+\frac{1}{2})}$ \verb"satisfying" $\|\bar{r}^{(k+\frac{1}{2})}-(W+\beta T){\bar{z}}^{(k+\frac{1}{2})}\|_2\leqslant \eta_k \|\bar{r}^{(k+\frac{1}{2})}\|_2$ \\[-0.6cm]
   \item \qquad  $u^{(k+1)}:=u^{(k+\frac{1}{2})}+{\bar{z}}^{(k+\frac{1}{2})}$\\[-0.6cm]
   \item \verb"EndDo"
\end{enumerate}

In the sequel we discuss the convergence of the ITTSCSP method. If $\beta$ is a positive constant, then $W+\beta T$ is nonsingular. In the case, we define the vector norm $|||x|||=\| (W+\beta T)x \|_2$ for all  $x \in \mathbb{C }^{n}$  and the matrix norm $||| X |||=\| (W+\beta T) X (W+\beta T)^{-1} \|_2$ for all $X \in \mathbb{C }^{n \times n}$. Hereafter, for a nonsingular matrix $X$, let $\kappa(X)$ be the spectral condition number of $X$.
\begin{lem}\label{lemma1}(see \cite{Bai6})
Let $W\in  \mathbb{R }^{n \times n}$ be symmetric positive definite, $T\in \mathbb{R}^{n \times n}$ be symmetric positive semidefinite and $\alpha,\beta>0$.\\
(i) If $y^{(\tau_k)}$ is the $\tau_k$th approximate solution generated by the $\tau_k$th step of the CG iteration for solving the Hermitian positive definite system of linear equations $(\alpha W+T)y=b$, then
\[
\| y^{(\tau_k)}-y^* \|_2 \leqslant \sigma_{h_1}(\alpha,\tau_k) \| y^{(0)}-y^* \|_2,
\]
where $y^*=(\alpha W+T)^{-1}b$ is the exact solution, $y^{(0)}$ is an initial guess and
\begin{eqnarray*}
\sigma_{h_1}(\alpha,\tau_k)\equiv 2 \left(  \frac{\sqrt{\kappa(\alpha W+T)}-1}{\sqrt{\kappa(\alpha W+T)}+1}  \right)^{\tau_k}.
\end{eqnarray*}
(ii) If $y^{(\nu_k)}$ is the $\nu_k$th approximate solution generated by the $\nu_k$th step of the CG iteration for solving the Hermitian positive definite system of linear equations $(W+\beta T)y=b$. Then
\begin{eqnarray*}
\| y^{(\nu_k)}-y^* \|_2 \leqslant \sigma_{h_2}(\beta,\nu_k) \| y^{(0)}-y^* \|_2,
\end{eqnarray*}
where $y^*=(W+\beta T)^{-1}b$ is the exact solution, $y^{(0)}$ is an initial guess and
\begin{eqnarray*}
\sigma_{h_2}(\beta,\nu_k)\equiv 2 \left(  \frac{\sqrt{\kappa (W+\beta T)}-1}{\sqrt{\kappa(W+\beta T)}+1}  \right)^{\nu_k}.
\end{eqnarray*}
\end{lem}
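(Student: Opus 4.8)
The plan is to recognize that statements (i) and (ii) are two instances of a single classical fact about the conjugate gradient (CG) method, and to obtain that fact from the standard Chebyshev-polynomial error estimate; since the lemma is attributed to \cite{Bai6}, the role of the argument here is mainly to recall how it goes.

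First I would check that the two relevant coefficient matrices are Hermitian positive definite: because $W$ is symmetric positive definite, $T$ is symmetric positive semidefinite and $\alpha,\beta>0$, both $\alpha W+T$ and $W+\beta T$ are real symmetric positive definite. It therefore suffices to prove the following generic claim and then apply it twice, with $(B,m)=(\alpha W+T,\tau_k)$ for part (i) and $(B,m)=(W+\beta T,\nu_k)$ for part (ii): if $B\in\mathbb{R}^{n\times n}$ is symmetric positive definite and $\{x^{(m)}\}$ are the CG iterates for $Bx=b$ from an initial guess $x^{(0)}$, with exact solution $x^*=B^{-1}b$, then $\|x^{(m)}-x^*\|_2\le 2\big((\sqrt{\kappa(B)}-1)/(\sqrt{\kappa(B)}+1)\big)^{m}\,\|x^{(0)}-x^*\|_2$.

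For the generic claim I would use the minimization property of CG: the iterate $x^{(m)}$ minimizes the $B$-energy norm $\|x-x^*\|_B$ over the affine Krylov subspace $x^{(0)}+\mathcal{K}_m(B,r^{(0)})$ with $r^{(0)}=b-Bx^{(0)}$. Hence the error can be written as $e^{(m)}=q_m(B)e^{(0)}$ for a polynomial $q_m$ of degree at most $m$ with $q_m(0)=1$, and, since $B$ is symmetric,
\[
\|e^{(m)}\|_B\le\Big(\min_{\substack{\deg q\le m\\ q(0)=1}}\ \max_{\lambda\in\sigma(B)}|q(\lambda)|\Big)\|e^{(0)}\|_B .
\]
Replacing $\sigma(B)$ by the interval $[\lambda_{\min}(B),\lambda_{\max}(B)]$ and taking $q$ to be the shifted and scaled Chebyshev polynomial of degree $m$ normalized so that $q(0)=1$ yields the well-known bound $2\big((\sqrt{\kappa(B)}-1)/(\sqrt{\kappa(B)}+1)\big)^{m}$, using $\kappa(B)=\lambda_{\max}(B)/\lambda_{\min}(B)$. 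Translating this energy-norm estimate back to the Euclidean norm via the spectral decomposition of $B$ completes the argument; this last step is exactly the bookkeeping carried out in \cite{Bai6}, so I would either quote it verbatim or reproduce it in one line.

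I expect the only genuinely delicate point to be that final norm conversion and the accounting of the multiplicative constant, since the CG optimality and the Chebyshev estimate are completely routine; everything else follows by specializing $B$ and the iteration count.
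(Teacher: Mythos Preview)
The paper does not give its own proof of this lemma; it is simply quoted from \cite{Bai6} and then used as input for the ITTSCSP convergence theorem. Your plan---reduce both parts to a single statement about CG applied to a generic symmetric positive definite $B$, then invoke the Chebyshev minimax bound---is exactly the standard route and is consistent with what the citation points to.

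One point deserves more care than you give it. The Chebyshev argument yields the bound in the $B$-energy norm,
\[
\|e^{(m)}\|_B \le 2\left(\frac{\sqrt{\kappa(B)}-1}{\sqrt{\kappa(B)}+1}\right)^{m}\|e^{(0)}\|_B,
\]
and the naive conversion to $\|\cdot\|_2$ via $\lambda_{\min}\|e\|_2^2\le\|e\|_B^2\le\lambda_{\max}\|e\|_2^2$ introduces an additional factor $\sqrt{\kappa(B)}$, not merely a harmless constant. So the inequality exactly as stated (Euclidean norm, no extra prefactor) does \emph{not} drop out of the ``one line'' you describe; the CG polynomial minimizes the energy norm, and there is no reason its sup over $\sigma(B)$ is controlled by the Chebyshev value. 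In \cite{Bai6} and in this paper the bound is recorded in this convenient form because it is only used qualitatively---to make $\sigma_{h_1},\sigma_{h_2}\to 0$ as $\tau_k,\nu_k\to\infty$---so the missing factor is irrelevant to the downstream argument. For your write-up, either keep the estimate in the energy norm, include the $\sqrt{\kappa(B)}$ factor explicitly, or do what the paper does and simply cite \cite{Bai6} without claiming to reproduce the conversion.
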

\begin{lem}\label{lemma2}
Let $W\in  \mathbb{R }^{n \times n}$ be symmetric positive definite, $T\in \mathbb{R}^{n \times n}$ be symmetric positive semidefinite and $\alpha,\beta>0$. Suppose that $S=W^{-\frac{1}{2}}TW^{-\frac{1}{2}}$ and $\mu_1\leqslant \mu_2 \leqslant \cdots \leqslant \mu_n$ are the eigenvalues of the matrix $S$. Then
\begin{eqnarray*}
\bar{\sigma}&\h:=\h&\| (\beta W-T)(\alpha W+T)^{-1}(W-\alpha T)(W+\beta T)^{-1} \|_2 \\
            &\h=\h&\| W^{\frac{1}{2}} (\beta I-S)(\alpha I+S)^{-1}(I-\alpha S)(I+\beta S)^{-1} W^{-\frac{1}{2}}\|_2
                    \leqslant \sqrt{\kappa(W)}  \rho(\mathcal{G}_{\alpha , \beta}),\\
           r&\h:=\h&\| W+\beta T \|_2  \| (W+\beta T)^{-1} \|_2=\kappa(W+\beta T),\\
c_{h}(\alpha)&\h:=\h&\| (\alpha W+T)^{-1}  (W-\alpha T) \|_2= \|W^{-\frac{1}{2}} (\alpha I+S)^{-1}  (I-\alpha S) W^{\frac{1}{2}}\|_2 \hspace{2.7cm}\\
             &\h\leqslant\h &  \sqrt{\kappa(W)}  \max_{\mu_i \in \sigma(S)}   \left \vert \frac{1-\alpha \mu_i}{\alpha+\mu_i} \right \vert
                    \leqslant   \sqrt{\kappa(W)}   \max  \left\{ \frac{1-\alpha\mu_1}{\alpha+\mu_1},\frac{\alpha \mu_n -1}{\alpha+\mu_n} \right \}, \\
c_{s}(\beta)&\h:=\h&\| ( W+\beta T)^{-1}  (\beta W- T) \|_2= \| W^{-\frac{1}{2}} (I+\beta T)^{-1}  (\beta I-S) W^{\frac{1}{2}} \|_2 \\
                    &\h\leqslant\h &  \sqrt{\kappa(W)}  \max_{\mu_i \in \sigma(S)}   \left \vert \frac{1-\alpha \mu_i}{\alpha+\mu_i} \right \vert
                      \leqslant   \sqrt{\kappa(W)}  \max  \left\{ \frac{\beta-\mu_1}{1+\beta \mu_1},\frac{\mu_n -\beta}{1+\beta \mu_n} \right \}.
\end{eqnarray*}
\begin{proof}
The proof is straightforward and is omitted.
\end{proof}
\end{lem}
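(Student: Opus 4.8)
The common ingredient for all four estimates is to conjugate every factor by $W^{1/2}$. Since $W$ is symmetric positive definite, $W^{1/2}$ exists, is symmetric positive definite, and $S=W^{-\frac12}TW^{-\frac12}$ is symmetric positive semidefinite with $T=W^{1/2}SW^{1/2}$. Hence
\[
\alpha W+T=W^{1/2}(\alpha I+S)W^{1/2},\quad W-\alpha T=W^{1/2}(I-\alpha S)W^{1/2},
\]
\[
\beta W-T=W^{1/2}(\beta I-S)W^{1/2},\quad W+\beta T=W^{1/2}(I+\beta S)W^{1/2}.
\]
When these are substituted into the four matrix products appearing in the statement, the interior $W^{1/2}W^{-1/2}$ pairs cancel and each product telescopes to a matrix of the form $W^{-1/2}\phi(S)W^{1/2}$ or $W^{1/2}\phi(S)W^{-1/2}$, where $\phi$ is a real rational function having no pole on $\sigma(S)$. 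For instance $(\alpha W+T)^{-1}(W-\alpha T)=W^{-1/2}(\alpha I+S)^{-1}(I-\alpha S)W^{1/2}$, and the four factors defining $\bar\sigma$ collapse to $W^{1/2}(\beta I-S)(\alpha I+S)^{-1}(I-\alpha S)(I+\beta S)^{-1}W^{-1/2}$. This yields the middle equalities in each line of the lemma at once.

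Next I would estimate $\|W^{\pm1/2}\phi(S)W^{\mp1/2}\|_2$. Submultiplicativity of the spectral norm gives $\|W^{\pm1/2}\phi(S)W^{\mp1/2}\|_2\le\|W^{1/2}\|_2\,\|W^{-1/2}\|_2\,\|\phi(S)\|_2=\sqrt{\kappa(W)}\,\|\phi(S)\|_2$, using $\|W^{1/2}\|_2=\sqrt{\lambda_{\max}(W)}$ and $\|W^{-1/2}\|_2=\lambda_{\min}(W)^{-1/2}$. Because $S$ is symmetric, $\phi(S)$ is symmetric, hence normal, so $\|\phi(S)\|_2=\rho(\phi(S))=\max_{\mu_i\in\sigma(S)}|\phi(\mu_i)|$. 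For $\bar\sigma$ the relevant function is $\phi(\mu)=\frac{(\beta-\mu)(1-\alpha\mu)}{(\alpha+\mu)(1+\beta\mu)}=-\lambda(\alpha,\beta,\mu)$, so $\|\phi(S)\|_2=\max_j|\lambda(\alpha,\beta,\mu_j)|=\rho(\mathcal{G}_{\alpha,\beta})$ by the computation already carried out in the proof of Theorem~\ref{Thm1}; this is exactly $\bar\sigma\le\sqrt{\kappa(W)}\,\rho(\mathcal{G}_{\alpha,\beta})$. Applying the same recipe with $\phi(\mu)=\frac{1-\alpha\mu}{\alpha+\mu}$ and with $\phi(\mu)=\frac{\beta-\mu}{1+\beta\mu}$ gives the first inequalities for $c_h(\alpha)$ and $c_s(\beta)$. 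Finally $r=\|W+\beta T\|_2\|(W+\beta T)^{-1}\|_2=\kappa(W+\beta T)$ is merely the definition of the spectral condition number of the symmetric positive definite matrix $W+\beta T$.

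It then remains to reduce the maximum over all of $\sigma(S)$ to the two extreme eigenvalues $\mu_1,\mu_n$. Here I would invoke the monotonicity recorded (for the $\alpha$-variable) in the proof of Theorem~\ref{Thm2}: the maps $\mu\mapsto\frac{1-\alpha\mu}{\alpha+\mu}$ and $\mu\mapsto\frac{\beta-\mu}{1+\beta\mu}$ are strictly decreasing on $[0,\infty)$, since their derivatives equal $-\frac{1+\alpha^2}{(\alpha+\mu)^2}$ and $-\frac{1+\beta^2}{(1+\beta\mu)^2}$ respectively. A scalar function that is monotone on $[\mu_1,\mu_n]$ attains the maximum of its absolute value at an endpoint, so $\max_{\mu_i\in\sigma(S)}\left|\frac{1-\alpha\mu_i}{\alpha+\mu_i}\right|=\max\left\{\left|\frac{1-\alpha\mu_1}{\alpha+\mu_1}\right|,\left|\frac{1-\alpha\mu_n}{\alpha+\mu_n}\right|\right\}\le\max\left\{\frac{1-\alpha\mu_1}{\alpha+\mu_1},\frac{\alpha\mu_n-1}{\alpha+\mu_n}\right\}$, and similarly for the $\beta$-expression; this closes the chain of inequalities.

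I do not anticipate a genuine obstacle, as the argument is bookkeeping built on the single similarity transform by $W^{1/2}$. The only points needing a little care are sign matching: recognising that the scalar function attached to $\bar\sigma$ is precisely $-\lambda(\alpha,\beta,\cdot)$, so its maximal modulus is the factor $\rho(\mathcal{G}_{\alpha,\beta})$ and not some other piece; and checking in the last step that replacing $\left|\frac{1-\alpha\mu_n}{\alpha+\mu_n}\right|$ by $\frac{\alpha\mu_n-1}{\alpha+\mu_n}$ (and $\left|\frac{1-\alpha\mu_1}{\alpha+\mu_1}\right|$ by $\frac{1-\alpha\mu_1}{\alpha+\mu_1}$) is harmless, because whenever one numerator carries the "wrong" sign that term is dominated by the other endpoint. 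The omitted details beyond this are the routine verifications that the telescoping in the first paragraph is correct.
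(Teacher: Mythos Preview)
Your proposal is correct and supplies precisely the routine details the paper suppresses: the conjugation by $W^{1/2}$ to reduce each quantity to $\|W^{\pm1/2}\phi(S)W^{\mp1/2}\|_2$, the submultiplicativity bound $\sqrt{\kappa(W)}\,\|\phi(S)\|_2$, the identification $\|\phi(S)\|_2=\max_{\mu_i}|\phi(\mu_i)|$ via normality, and the endpoint reduction via monotonicity of $\mu\mapsto\frac{1-\alpha\mu}{\alpha+\mu}$ and $\mu\mapsto\frac{\beta-\mu}{1+\beta\mu}$. Since the paper's own proof is simply ``straightforward and omitted,'' there is nothing further to compare; your argument is exactly the intended one.
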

\begin{thm}\label{Theorem4}
Let $W\in  \mathbb{R }^{n \times n}$ be symmetric positive definite, $T\in \mathbb{R}^{n \times n}$ be symmetric positive semidefinite and $\{\tau_k\}$ and $\{\nu_k\}$ be two sequences of positive integers. If the iterative sequence $\{u^{(k)}\}$ is generated by the ITTSCSP iteration from an initial guess $u^{(0)}$, then it holds that
\begin{eqnarray*}
||| u^{(k+1)}-u^*|||  \leqslant (\bar \sigma+\epsilon(\alpha,\beta,\tau_k,\nu_k)) ||| u^{(k)}-u^* |||,
\end{eqnarray*}
where $u^*\in \mathbb{C }^{n}$ is the exact solution of the system of linear equations \eqref{1},
\begin{equation}\label{EpsEq}
\epsilon(\alpha,\beta,\tau_k,\nu_k)=\left(1+rc_h(\alpha)\right)\left(\sigma_{h_1} c_s(\beta) + \sigma_{h_1} \sigma_{h_2} (1+c_s(\beta))\right)+r \sigma_{h_2} c_{h}(\alpha)(1+c_s(\beta)) ,
\end{equation}
with $\sigma_{h_1}(\alpha,\tau_k)$ and $\sigma_{h_2}(\beta,\nu_k)$ being defined as in Lemma \ref{lemma1} and $r, c_{h}(\alpha)$ and $c_s(\beta)$ are defined in Lemma \ref{lemma2}. Therefore, if there exists a non-negative constant $\sigma^{ittscsp}(\alpha)\in [0,1)$ such that
\[
\bar{\sigma}+\epsilon(\alpha,\beta,\tau_k,\nu_k)\leqslant \sigma^{ittscsp}, \quad k=0,1,2,\ldots,
\]
then the iterative sequence ${u^{(k)}}$ converges to $u^*\in \mathbb{C }^{n}$ with a convergence factor being at most $\sigma^{ittscsp}$.
\end{thm}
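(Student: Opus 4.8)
The plan is to reduce one ITTSCSP sweep to a single error recurrence and then estimate it in the weighted norm $|||\cdot|||$. Write $u^{*}$ for the exact solution of \eqref{Eq1} and $e^{(k)}=u^{(k)}-u^{*}$. First I would isolate the two \emph{exact} sub-solves hidden in the algorithm: if the first subsystem were solved exactly one would form $\tilde u^{(k+\frac12)}=u^{(k)}+z^{(k)}$ with $z^{(k)}=(\alpha W+T)^{-1}(\alpha-i)r^{(k)}$, and $\tilde u^{(k+\frac12)}$ is exactly the first half-step of the (exact) TTSCSP method \eqref{it}. Using the identity $(\alpha-i)A=(\alpha W+T)-i(W-\alpha T)$, the relation $r^{(k)}=-Ae^{(k)}$, and the analogous identity for $u^{*}$, one obtains $\tilde u^{(k+\frac12)}-u^{*}=i(\alpha W+T)^{-1}(W-\alpha T)e^{(k)}$ and $z^{(k)}=\big(i(\alpha W+T)^{-1}(W-\alpha T)-I\big)e^{(k)}$. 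Writing $\delta_{1}=\bar z^{(k)}-z^{(k)}$ for the CG error of the first solve (started from the zero vector), we have $\bar u^{(k+\frac12)}=\tilde u^{(k+\frac12)}+\delta_{1}$. The same analysis applied to the second subsystem — exact sub-solve $z^{(k+\frac12)}=(W+\beta T)^{-1}(1-\beta i)r^{(k+\frac12)}$, $r^{(k+\frac12)}=b-A\bar u^{(k+\frac12)}$, so that $\hat u^{(k+1)}:=\bar u^{(k+\frac12)}+z^{(k+\frac12)}$ satisfies $\hat u^{(k+1)}-u^{*}=i(W+\beta T)^{-1}(\beta W-T)(\bar u^{(k+\frac12)}-u^{*})$ — together with $\delta_{2}=\bar z^{(k+\frac12)}-z^{(k+\frac12)}$ and $u^{(k+1)}=\hat u^{(k+1)}+\delta_{2}$, yields, after substitution and recalling $\mathcal G_{\alpha,\beta}=-(W+\beta T)^{-1}(\beta W-T)(\alpha W+T)^{-1}(W-\alpha T)$, the recurrence
\[
e^{(k+1)}=\mathcal G_{\alpha,\beta}\,e^{(k)}+i\,(W+\beta T)^{-1}(\beta W-T)\,\delta_{1}+\delta_{2}.
\]

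Next I would take $|||\cdot|||$ of this identity. The leading term is controlled by $|||\mathcal G_{\alpha,\beta}|||=\|(W+\beta T)\mathcal G_{\alpha,\beta}(W+\beta T)^{-1}\|_{2}=\|(T-\beta W)(\alpha W+T)^{-1}(W-\alpha T)(W+\beta T)^{-1}\|_{2}=\bar\sigma$, so $|||\mathcal G_{\alpha,\beta}e^{(k)}|||\le\bar\sigma\,|||e^{(k)}|||$. For the perturbation terms, note $|||\,i(W+\beta T)^{-1}(\beta W-T)\delta_{1}\,|||=\|(\beta W-T)\delta_{1}\|_{2}\le c_{s}(\beta)\,|||\delta_{1}|||$ — the last inequality because $(\beta W-T)(W+\beta T)^{-1}$ is the transpose of $(W+\beta T)^{-1}(\beta W-T)$, hence has $2$-norm $c_{s}(\beta)$ — and $|||\delta_{2}|||=\|(W+\beta T)\delta_{2}\|_{2}$. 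The CG estimates of Lemma \ref{lemma1} bound $\|\delta_{1}\|_{2}\le\sigma_{h_{1}}\|z^{(k)}\|_{2}$ and $\|\delta_{2}\|_{2}\le\sigma_{h_{2}}\|z^{(k+\frac12)}\|_{2}$, while the triangle inequality and the constants of Lemma \ref{lemma2} give $\|z^{(k)}\|_{2}\le(1+c_{h}(\alpha))\|e^{(k)}\|_{2}$, $\|z^{(k+\frac12)}\|_{2}\le(1+c_{s}(\beta))\|\bar u^{(k+\frac12)}-u^{*}\|_{2}$ and $\|\bar u^{(k+\frac12)}-u^{*}\|_{2}\le c_{h}(\alpha)\|e^{(k)}\|_{2}+\|\delta_{1}\|_{2}$. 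Passing between $\|\cdot\|_{2}$ and $|||\cdot|||$ through $\|W+\beta T\|_{2}$ and $\|(W+\beta T)^{-1}\|_{2}$ (whose product is $r=\kappa(W+\beta T)$), and chaining all these bounds — the nested dependence of $z^{(k+\frac12)}$ on $\delta_{1}$ being what produces the cross term $\sigma_{h_{1}}\sigma_{h_{2}}$ — I would collect the contributions into
\[
|||e^{(k+1)}|||\le\big(\bar\sigma+\epsilon(\alpha,\beta,\tau_{k},\nu_{k})\big)\,|||e^{(k)}|||,
\]
with $\epsilon$ as in \eqref{EpsEq}. The final assertion is then immediate: if $\bar\sigma+\epsilon(\alpha,\beta,\tau_{k},\nu_{k})\le\sigma^{ittscsp}<1$ for all $k$, induction on $k$ gives $|||e^{(k)}|||\le(\sigma^{ittscsp})^{k}|||e^{(0)}|||\to 0$, so $u^{(k)}\to u^{*}$ with convergence factor at most $\sigma^{ittscsp}$.

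The step I expect to be delicate is the norm bookkeeping in the second paragraph. Each perturbation estimate must be routed through the matrix $W+\beta T$ (and, where the bounds of Lemma \ref{lemma2} enter, through $W^{\pm1/2}$) in precisely the right way so that the factors $r$, $c_{h}(\alpha)$, $c_{s}(\beta)$ and the additive $1$'s assemble into the exact expression $(1+rc_{h}(\alpha))\big(\sigma_{h_{1}}c_{s}(\beta)+\sigma_{h_{1}}\sigma_{h_{2}}(1+c_{s}(\beta))\big)+r\sigma_{h_{2}}c_{h}(\alpha)(1+c_{s}(\beta))$ rather than a looser one; in particular one must be careful whether a given occurrence of $c_{h}(\alpha)$ is multiplied by $r$ or not. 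A minor point worth stating explicitly is that the CG iterations are started from the zero vector, so that the ``initial error'' appearing in Lemma \ref{lemma1} equals $\|z^{(k)}\|_{2}$, respectively $\|z^{(k+\frac12)}\|_{2}$, and that $\hat u^{(k+1)}$ must be built from the \emph{inexact} half-step $\bar u^{(k+\frac12)}$ rather than from $\tilde u^{(k+\frac12)}$.
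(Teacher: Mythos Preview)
Your plan is correct and is precisely the approach the paper has in mind: the paper does not spell out the argument at all but simply states that the proof follows that of Theorem~4.1 in \cite{Bai6} (the inexact HSS analysis of Bai, Golub and Ng), which is exactly the error--recurrence--plus--perturbation scheme you outline. Your derivation of $e^{(k+1)}=\mathcal G_{\alpha,\beta}e^{(k)}+i(W+\beta T)^{-1}(\beta W-T)\delta_{1}+\delta_{2}$, the identification $|||\mathcal G_{\alpha,\beta}|||=\bar\sigma$, and the way you feed Lemmas~\ref{lemma1} and~\ref{lemma2} into the perturbation bounds all match the standard argument, including your caveat that the bookkeeping needed to land on the exact expression \eqref{EpsEq} is the only genuinely tedious step.
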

\begin{proof}
The proof is similar to that of  Theorem 4.1 in \cite{Bai6} and omitted here.
\end{proof}
\begin{remark}
Assume that $\bar{\sigma}<1$. From Eq. \eqref{EpsEq} we see that for large enough values of $\tau_k$ and $\nu_k$ we have
\[
\bar{\sigma}+\epsilon(\alpha,\beta,\tau_k,\nu_k)<1,
\]
which guarantees the convergence of  the ITTSCSP iteration method. In particular, if
\[
\rho(\mathcal{G}_{\alpha , \beta})<\frac{1}{\sqrt{\kappa(W)}},
\]
the convergence of the method is guaranteed.
\end{remark}

\section{Numerical experiments}\label{Sec4}

We use three test problems from \cite{Bai2,Bai3} to illustrate the feasibility and effectiveness of the TTSCSP iteration method and its inexact version for solving the complex system \eqref{Eq1}. To do so, we compare the numerical results of the TTSCSP iteration method with those of the PMHSS,  the SCSP, the TSCSP methods. Numerical comparisons of the inexact version of these algorithms are also performed. Further, we apply the TTSCSP preconditioner to accelerate the convergence of the the BiCGSTAB \cite{BiCGSTAB} iteration method for system \eqref{1}. Hereafter, the BiCGSTAB  method with the TTSCSP preconditioner is denoted by BiCGSTAB-TTSCSP.  {To show the effectiveness of the preconditioner, numerical results of the BiCGSTAB method are compared with those of the preconditioned BiCGSTAB in conjunction with the TTSCSP preconditioner (with the optimal values of the parameters and $\alpha=\beta=1$)  and the modified incomplete LU (ILU) factorization computed via (in \textsc{Matlab} notation)}
\begin{center}
	\verb" [L,U] = ilu(A,struct('milu','row','droptol',1e-2));"\\
\end{center}
 {for solving  the system \eqref{Eq1}}. For this purpose, we use the \verb"bicgstab" command of \textsc{Matlab} with right reconditioning. In the implementation of the TTSCSP preconditioner the systems with the coefficient matrices $\alpha W +T$ and $W+\beta T$ are solved using the Cholesky factorization of these matrices.

Numerical results are compared in terms of  both the number of iterations and the CPU time  {(in seconds)} which are, respectively, denoted by ``Iter" and ``CPU" in the tables.
In the tables a  $\dag$  (resp., $\ddag$) means that the method fails to converge in 500 iterations (resp., because of memory limitation). Iter in the BiCGSTAB method may be an integer plus 0.5, indicating convergence half way through an iteration.  In all the tests, we use a zero vector as an initial guess and stopping criterion
$$
\frac{\| b-A u^{(k)} \|_{2}}{\| b\|_{2}}<10^{-6},
$$
is always used, where $u^{(k)}=x^{(k)}+iy^{(k)}$. For all the methods (exact versions), we apply the sparse Cholesky factorization incorporated with the symmetric approximate minimum degree reordering \cite{saad} for solving the subsystems. To do so, we have used the \verb"symamd.m" command of \textsc{Matlab}. In all the inexact version of the algorithms we apply the preconditioned CG (PCG) iteration method in conjunction with the modified incomplete Cholesky factorization with dropping tolerance $10^{-2}$ as the preconditioner for solving the subsystems. In the \textsc{Matlab} notation the preconditioner can be computed using the following command
\begin{center}
\verb" LC=ichol(C,struct('michol','on','type','ict','droptol',1e-2));"\\
\end{center}
where $C$ is a given symmetric positive definite matrix. For the inexact iteration methods, the stopping criterion for the PCG iteration method is ${10}^{-2}$.
All runs are implemented in \textsc{Matlab} R2014b with a Laptop with 2.40 GHz central processing unit (Intel(R) Core(TM) i7-5500), 8 GB memory and Windows 10 operating system.


\begin{table}[!ht]
\centering\caption{Numerical results of TTSCSP, TSCSP, SCSP and PMHSS for Example \ref{Ex1} with $\tau=h$.} \label{Tbl1}
\medskip
\begin{tabular}{lllllllll} \hline
Method            &  $m$   & 32    & 64   &128    &  256    & 512   & 1024   &       \\ \hline \vspace{-0.3cm} \\ \vspace{0.0cm}

TTSCSP       & $\alpha_{opt}$     & 0.33  & 0.30   & 0.30    & 0.30   & 0.30  & 0.30  \\
                   & $\beta_{opt}$      & 1.1    & 1.1   & 1.1    &1.1    & 1.1   & 1.1 \\
                   & Iter                      &   4     &  4     &  4      & 4      & 4     &  4  \\
                   & CPU                     & 0.01  & 0.02  & 0.13   & 0.60 & 3.49 & 18.34 \\[0.2cm]
TSCSP             & $\alpha_{opt}$ & 0.46    & 0.46    & 0.46    & 0.46    & 0.46    & 0.46  \\
                      & Iter                    & 7        & 7          & 7         &  7       & 7         &  7    \\
                      & CPU                   & 0.01    & 0.03     & 0.16     & 0.84    & 5.25    &  28.01  \\[0.2cm]

SCSP             & $\alpha_{opt}$    & 0.65    & 0.65    & 0.65    & 0.65    & 0.65    & 0.65  \\
                      & Iter                     & 9        & 9         &  9        &  9        & 9         &  9    \\
                      & CPU                     & 0.02   & 0.03     & 0.11    & 0.51    & 3.49    &  15.93  \\[0.2cm]


PMHSS             & $\alpha_{opt}$   & 1.36    & 1.35    & 1.05    & 1.05     & 1.05   & 1.05  \\
                       & Iter                      & 21       & 21       & 21      & 21        & 20      & 20     \\
                       & CPU                     & 0.02     & 0.05    & 0.35   & 1.84     & 11.90  &  64.19  \\ [0.2cm] \hline


 BiCGSTAB      &     Iter                  & 39      & 58 & 78.5 & 120.5  & 164   & 218.5  \\

                       & CPU                          & 0.04        & 0.07              & 0.18                &    0.92             & 7.64    & 38.44 \\[0.2cm]

 BiCGSTAB-ILU      &     Iter                  & 5.5  & 6.5 & 8.0 & 10.5  & 12.5   & 15.5  \\

                       & CPU                          & 0.03  & 0.03  & 0.06  &    0.22  & 1.54   & 7.25 \\[0.2cm]

BiCGSTAB-TTSCSP        & $\alpha_{opt}$           & 0.33      & 0.30    & 0.30    & 0.30    & 0.30    & 0.30  \\
                                    & $\beta_{opt}$             & 1.10      & 1.10    & 1.10    & 1.10    & 1.10    & 1.10  \\
                                    & Iter                              & 2          & 2         &  2        &  2       &  2        &  2    \\
                                    & CPU                             & 0.02      & 0.05     & 0.17    & 0.85    & 5.76    &  29.08 \\[0.2cm]

BiCGSTAB-TTSCSP         & Iter                              & 2.5       & 2.5       &  2.5     &  2.5    &  2.5     &  2.5    \\
   $\hspace{0.7cm}\alpha=\beta=1$        & CPU   & 0.01      & 0.02     & 0.16   & 0.78    & 5.80  &  29.00 \\[0.2cm] \hline

 \end{tabular}
\end{table}


\begin{table}[!ht]
\centering\caption{Numerical results of ITTSCSP, ITSCSP, ISCSP and IPMHSS  for Example \ref{Ex1} with $\tau=h$. \label{Tbl2}}
\bigskip
\begin{tabular}{llllllllll} \hline
Method            &  $m$   & 32    & 64   &128    &  256    & 512   & 1024   &    2048  &   \\ \hline \vspace{-0.3cm} \\ \vspace{0.0cm}

ITTSCSP      & $\alpha_{opt}$   & 0.34    & 0.34   & 0.34    & 0.34   & 0.34   & 0.34  &  0.34 \\
                   & $\beta_{opt}$    & 1.12    & 1.12   & 1.12    &1.12    & 1.12   & 1.12  &  1.12 \\
                   & Iter                     &   4      &  4       &  4        & 4        & 4        &  4     &  4  \\
                   & CPU                    & 0.04    & 0.05   & 0.09     & 0.31   &1.79    & 7.77  & 42.20 \\[0.2cm]

ITSCSP             & $\alpha_{opt}$  & 0.46   & 0.46     & 0.46    & 0.46   & 0.46    & 0.46   & 0.46  \\
                      & Iter                     & 7        & 7          & 7         &  7       & 7         &  7      &  7 \\
                      & CPU                    & 0.04   & 0.06     & 0.13     & 0.52    & 2.74    &12.88  & 67.79\\[0.2cm]

ISCSP             & $\alpha_{opt}$  & 0.65    & 0.65     & 0.65     & 0.65    & 0.65     & 0.65   & 0.65  \\
                      & Iter                   & 9        & 9          & 9         &  9        & 9         &  9       &  9 \\
                      & CPU                   & 0.04   & 0.05     & 0.09     & 0.35    &1.90     & 7.93    & 38.91 \\

IPMHSS          & $\alpha_{opt}$   & 1.36     & 1.35    & 1.05    & 1.05     & 1.05   & 1.05   & 1.07  \\
                      & Iter                     & 21       & 21       & 21      & 21        & 20       & 20      & 20\\
                      & CPU                   & 0.07     & 0.15    & 0.42   & 1.78     & 10.79   &43.72   & 211.59   \\  \hline

 \end{tabular}
\end{table}

\begin{table}[!ht]
	\centering\caption{Numerical results of TTSCSP, TSCSP, SCSP and PMHSS for Example \ref{Ex1} with $\tau=500h$}. \label{Tbl1-b}\\
	\begin{tabular}{lllllllll} \hline
		Method            &  $m$   & 32    & 64   &128    &  256    & 512   & 1024   &       \\ \hline \vspace{-0.3cm} \\ \vspace{0.0cm}
		
		TTSCSP       & $\alpha_{opt}$     & 0.37  & 0.49  & 0.58   & 0.63   & 0.65  &  0.66 \\
		& $\beta_{opt}$ & 1.00 & 1.00  & 1.00   & 1.00   & 1.00  &  1.00\\
		& Iter          &   2  &  2    &  2      & 2     & 2     &  2  \\
		& CPU           & 0.01 & 0.02  &  0.10   & 0.49  & 2.83  &  13.71 \\[0.2cm]
		
		TSCSP             & $\alpha_{opt}$ & 0.94  & 0.94 & 0.94  & 0.94   & 0.94  &  0.94 \\
		& Iter       & 2     & 2    & 2     & 2  & 2       &   3 \\
		& CPU        & 0.01  & 0.02 & 0.10  &  0.50   & 2.97    &  16.78  \\[0.2cm]
		
		SCSP             & $\alpha_{opt}$    & 0.98    & 0.99    & 0.99    & 0.99    & 0.99    & 0.99  \\
		& Iter         & 3        & 3         &  3        &  3        & 4   &  5    \\
		& CPU          &  0.01  &   0.02  & 0.07    &  0.37   &  2.19   &  12.10  \\[0.2cm]
		
		
		PMHSS             & $\alpha_{opt}$   & 0.91    & 0.91    & 0.91    & 0.91     & 0.91   &  0.91 \\
		& Iter        & 20       & 20       & 20      & 20        & 20      & 20     \\
		& CPU         &  0.01    &  0.05   & 0.34   & 1.84   & 14.45  &   71.53 \\ [0.2cm] \hline

		
		BiCGSTAB     &     Iter    &  62.5  &  117.5  &  220.5  & 414.5  & 483.0   & $\dag$  \\
		& CPU        &    0.04     &  0.10      &  0.46     & 3.06    &  23.63    &  \\[0.2cm]

		BiCGSTAB-ILU     &     Iter    &  11.5  &  18.0  &  24.0  & 35.5  & 48.0   & 59.5  \\
		& CPU        &    0.03     &  0.04      &  0.11     & 0.57    & 5.36   & 24.74 \\[0.2cm]
		
		BiCGSTAB-TTSCSP        & $\alpha_{opt}$           & 0.37      & 0.49    & 0.58    & 0.63    & 0.65    & 0.65  \\
		& $\beta_{opt}$   & 1.00      & 1.00    & 1.00    & 1.00    & 1.00    & 1.00  \\
		& Iter     & 1.0   & 1.0    &  1.0  &  1.0   &  1.0   &  1.0    \\
		& CPU     &  0.02   &  0.04    & 0.14    & 0.72    & 4.52    & 22.03  \\[0.2cm]
		
		BiCGSTAB-TTSCSP         & Iter  & 1.0   & 1.0    &  1.0    &  1.0   &  1.0 &  1.0   \\
		$\hspace{0.7cm}\alpha=\beta=1$        & CPU   &  0.01    & 0.02     & 0.10    &  0.53   & 3.81  & 18.40 \\[0.2cm] \hline

	\end{tabular}
\end{table}


\begin{table}[!ht]
	\centering\caption{Numerical results of ITTSCSP, ITSCSP, ISCSP and IPMHSS  for Example \ref{Ex1} with $\tau=500h$. \label{Tbl2-b}}
	\bigskip
	\begin{tabular}{llllllllll} \hline
		Method            &  $m$   & 32    & 64   &128    &  256    & 512   & 1024   &    2048  &   \\ \hline \vspace{-0.3cm} \\ \vspace{0.0cm}
		
		ITTSCSP      & $\alpha_{opt}$   & 0.85   & 0.85   & 0.85    & 0.85   & 0.85   & 0.85  &  0.85 \\
		& $\beta_{opt}$    & 1.00  & 1.00  & 1.00   & 1.00  & 1.00   &  1.00 &  1.00 \\
		& Iter           & 2  &   2& 2  & 2 &  2  & 2& 3 \\
		& CPU           &  0.03 & 0.04 & 0.11 & 0.44  & 4.40 & 23.35 & 189.71 \\
		[0.2cm]
		
		ITSCSP             & $\alpha_{opt}$  & 0.94   & 0.94     & 0.94    & 0.94   & 0.94    & 0.94   & 0.94  \\
		& Iter        & 2   & 2   & 2   &  2  & 3  &  3   &  3 \\
		& CPU         &  0.03 &  0.05 & 0.10  & 0.43  & 7.70  & 45.96 & 233.25 \\[0.2cm]
		
		ISCSP             & $\alpha_{opt}$  & 0.99    & 0.99     & 0.99     & 0.99    & 0.99     & 0.99   & 0.99  \\
		& Iter                   & 3        & 3          & 3         &  4        & 4       &  5 & 5 \\
		& CPU                   & 0.03   &  0.04    &  0.09   & 0.49   & 6.07    & 40.13 & 204.43  \\
		
		IPMHSS          & $\alpha_{opt}$   & 0.91    & 0.91   & 0.91   & 0.91   & 0.91  & 0.91   &   \\
		& Iter    & 20       & 20       & 20      & 20        & 20       & 20      & $\ddagger$\\
		& CPU    & 0.06 &  0.25 &  1.20 & 6.69   &   69.52 &  389.97 &    \\  \hline
		
	\end{tabular}
\end{table}

\begin{example}\label {Ex1}\rm
See \cite{Bai2,Bai3} Consider the system of linear equations
\begin{equation}
[(K+\frac{3-\sqrt{3}}{\tau})+i\big(K+\frac{3+\sqrt{3}}{\tau}I)]x=b,
\end{equation}
where $\tau$ is the time step-size and $K$ is the five-point centered difference matrix approximating the negative Laplacian operator $L\equiv -\Delta$ with homogeneous Dirichlet boundary conditions, on a uniform mesh in the unit square $[0, 1] \times [0, 1]$ with the mesh-size $h=1/(m + 1)$. The matrix $K\in\mathbb{R }^{n \times n}$ possesses the tensor-product form $K=I\otimes V_m + V_m \otimes I$, with $V_m=h^{-2} \textrm{tridiag} (-1, 2,-1)\in \mathbb{R }^{m \times m}$. Hence, $K$ is an $n \times n$ block-tridiagonal matrix, with $n = m^2$. We take
$$
W=K+\frac{3-\sqrt{3}}{\tau}I, \quad \textrm{and}  \quad T=K+\frac{3+\sqrt{3}}{\tau}I
$$
and the right-hand side vector $b$ with its $j$th entry $b_j$ being given by
$$
b_j=\frac{(1-i)j}{\tau(j+1)^2},  \quad  j=1,2, \ldots ,n.
$$
 {In our tests, we take $\tau= h$ and $\tau=500h$.} Furthermore, we normalize coefficient matrix and right-hand side by multiplying both by $h^2$.
\end{example}


\begin{table}[!ht]
\centering
\caption{Numerical results of TTSCSP, TSCSP, SCSP and PMHSS for Example \ref{Ex2}.} \label{Tbl3}
\bigskip
\begin{tabular}{lllllllll} \hline
Method            &  $m$                & 32        & 64       &128      &  256     & 512        & 1024  &   \\ \hline \vspace{-0.3cm} \\ \vspace{0.0cm}

TTSCSP              & $\alpha_{opt}$       & 0.4       & 0.4      & 0.45    & 0.45     & 0.45     & 0.45  \\
                    & $\beta_{opt}$        & 0.1       & 0.1      & 0.1      & 0.1      & 0.1        & 0.1\\
                    & Iter                        & 10        & 9         & 8         &  8        & 8           & 8\\
                    & CPU                      & 0.01      & 0.03    & 0.20     & 0.90    & 6.27      &  28.74 \\[0.2cm]

TSCSP        & $\alpha_{opt}$ & 0.09      & 0.08      & 0.07     & 0.06     & 0.06      &  0.06  \\
                  & Iter                   & 22         & 24         & 23        &  23      & 21          & 20\\
                  & CPU                  & 0.02       & 0.06      & 0.45      & 2.04     & 13.90      & 61.47  \\[0.2cm]

SCSP        & $\alpha_{opt}$    & 1.35      & 1.37      & 1.42     & 1.43     &  1.47     & 1.48   \\
                  & Iter                   & 38         & 38         & 36        &  35      &   33        &  32\\
                  & CPU                  & 0.02       & 0.06      & 0.33     & 1.51     &10.69       & 51.08  \\[0.2cm]


PMHSS            & $\alpha_{opt}$  & 0.98    & 0.93      & 1.1      & 0.97      & 0.97    & 1.0\\
                       & Iter                   & 37       & 38        & 38       & 38         & 38       & 38\\
                      & CPU                    & 0.03     & 0.08      & 0.62      & 3.10      & 23.24   & 116.50\\[0.2cm]  \hline


BiCGSTAB   & Iter   & 41.5     &  83.5   &   163.5   & 368.5  &  $ \hspace{0.2cm} \dag$ &  $ \hspace{0.2cm} \dag$ \\
                                     & CPU                          &  0.04     &  0.08      &  0.34   & 2.79    &       &  \\[0.2cm]

BiCGSTAB-ILU          & Iter     & 0.5     &  0.5    &   0.5   & 0.5  &  0.5 &  0.5 \\
                                     & CPU                          &  0.02     &  0.02   &  0.03   & 0.06    &  0.26     & 0.91 \\[0.2cm]

BiCGSTAB-TTSCSP        & $\alpha_{opt}$            & 0.40      & 0.40    & 0.45    & 0.45    & 0.45    & 0.45  \\
                                    & $\beta_{opt}$             & 0.10      & 0.10    & 0.10    & 0.10    & 0.10    & 0.10  \\
                                    & Iter                             & 3.5   & 3.5    &  3.5     &  3.0     &  3.0       &  2.5    \\
                                    & CPU                            & 0.03      & 0.06     & 0.22    & 1.03   & 7.17    & 22.30 \\[0.2cm]

BiCGSTAB-TTSCSP                                & Iter     & 3.5   & 3.5       &  3.5  &  3.0        &  2.5     &   2.5   \\
   $\hspace{0.7cm}\alpha=\beta=1$     & CPU    & 0.01      & 0.03     & 0.18    &  0.88  & 5.78  &  28.71 \\[0.2cm] \hline

 \end{tabular}
\end{table}

\renewcommand{\thefootnote}{\fnsymbol{footnote}}
\begin{table}[!ht]
\centering
\caption{Numerical results of ITTSCSP, ITSCSP, ISCSP and IPMHSS  for Example \ref{Ex2}.} \label{Tbl4}
\bigskip
\begin{tabular}{llllllllll} \hline
Method            &  $m$            & 32        & 64       &128      &  256     & 512        & 1024  &  2048 \\ \hline \vspace{-0.3cm} \\ \vspace{0.0cm}

ITTSCSP       & $\alpha_{opt}$    & 0.4       & 0.4      & 0.42     & 0.4      & 0.4        & 0.4      &  0.4 \\
                  & $\beta_{opt}$      & 0.12      & 0.09    & 0.09     & 0.09    & 0.09      & 0.09     &  0.09 \\
                  & Iter                      &  9         & 9         &  8         &  8        & 8           & 8         &  8\\
                  & CPU                     & 0.05      & 0.10    & 0.31      & 1.56    & 12.21    & 64.88   &  397.67\\[0.2cm]

ITSCSP        & $\alpha_{opt}$& 0.1        & 0.08      & 0.07     & 0.07     & 0.07       & 0.06     & 0.06\\
                  & Iter                   & 23         & 27         & 25        &  24       & 24          & 23        & 22\\
                  & CPU                  & 0.09       & 0.25      & 0.89     & 5.63     & 48.31     & 242.39  & 1335.77 \\[0.2cm]

ISCSP        & $\alpha_{opt}$ & 1.35      & 1.37      & 1.39     & 1.43     & 1.45       &  1.46      & 1.47\\
                  & Iter                 & 38         &  38        & 37        &  35       & 34          &    33       & 32\\
                  & CPU                 & 0.07      & 0.16      & 0.54     &  2.82    & 21.81     &   121.12  &  633.45\\[0.2cm]

IPMHSS         & $\alpha_{opt}$  & 0.78       & 0.82    & 0.75   & 0.77     & 0.82       & 0.94        & \\
               & Iter            & 36         & 37      & 38     &  38      & 38         & 39          &  $\hspace{0.2cm} \ddag$ \\
               & CPU             & 0.13       & 0.41    &1.65    & 9.91     & 75.90      & 497.06      & \\
               & \verb"droptol"  & 1e-2       & 5e-3    & 1e-3   & 5e-4     & 1e-5       & 1e-5  &  \\  \hline

 \end{tabular}
\end{table}


\begin{example}\label {Ex2}\rm
(See \cite{Bai2,Bai3}) Consider the system of linear  equations $\eqref{Eq1} $ as following
$$
\big[ (-\omega ^2 M+K) +i(\omega C_V +C_ H) \big] =b,
$$
where $M$ and $K$ are the inertia and the stiffness matrices, $C_V$ and $C_H$ are the viscous and the hysteretic damping matrices, respectively, and $ \omega$ is the driving circular frequency. We take $C_H = \mu K$ with $\mu$ a damping coefficient, $M = I, C_V = 10I$, and $K$ the five-point centered difference matrix approximating the negative Laplacian operator with homogeneous Dirichlet boundary conditions, on a uniform mesh in the unit square $[0, 1] \times [0, 1]$ with the mesh-size $h =1/(m+1)$. The matrix $K \in \mathbb{R }^{n \times n}$ possesses the tensor-product form $K = I \otimes V_m+V_m \otimes I$, with $V_m =h^{-2} \textrm{tridiag}(-1, 2,-1) \in\mathbb{R }^{m \times m} $. Hence, $K$ is an $n \times n$ block-tridiagonal matrix, with $n = m^2$. In addition, we set $\mu = 0.02$, $\omega = \pi$, and the right-hand side vector b to be $b = (1 + i)A\textbf{1}$, with \textbf{1} being the vector of all entries equal to 1. As before, we normalize the system by multiplying both sides through by $h^2$.
\end{example}

\begin{table}[!ht]
\centering
\caption{Numerical results of TTSCSP, TSCSP, SCSP and PMHSS  for Example \ref{Ex3}.} \label{Tbl5}
\bigskip
\begin{tabular}{lllllllll} \hline
Method            &  $m$   & 32    & 64    &128    &  256  & 512     & 1024  &       \\ \hline \vspace{-0.3cm} \\ \vspace{0.0cm}

TTSCSP           & $\alpha_{opt}$    & 0.72     & 0.48   & 0.32    & 0.23    & 0.16      & 0.12 \\
                       & $\beta_{opt}$     & 0.2      & 0.2     & 0.2      & 0.2      & 0.2         & 0.2 \\
                      & Iter                      & 6         & 8        & 10       &  12     &   14         & 15 \\
                      & CPU                     & 0.01    & 0.05   & 0.20    &  1.82   &   15.05    & 86.96 \\[0.2cm]

TSCSP             & $\alpha_{opt}$    & 0.23    & 0.23     & 0.23      & 0.23    & 0.16   & 0.11\\
                      & Iter                      & 13       & 13        & 13         &  13      & 16      &  23\\
                      & CPU                     & 0.02    & 0.07     & 0.36      & 1.91    & 17.16  & 128.41\\[0.2cm]

SCSP             & $\alpha_{opt}$     & 1.92    & 1.44     & 1.15      & 1.02    & 0.96      &  0.93\\
                      & Iter                     & 15       & 25        & 40         &  59      & 78         &  94 \\
                      & CPU                     & 0.02    & 0.06     & 0.50      &  3.69    & 39.21    &   235.17 \\[0.2cm]


PMHSS             & $\alpha_{opt}$    & 0.42     & 0.57    & 0.78     & 0.73     & 0.76      & 0.81 \\
                       & Iter                      & 30        & 30       & 30        & 30       & 31         & 32\\
                       & CPU                     & 0.03      & 0.11    & 0.73     & 3.83    & 30.55    & 169.04\\[0.2cm] \hline


BiCGSTAB                      & Iter                           &  51.5     &  99.5    &   196.5   & 378  & $ \hspace{0.2cm} \dag$  &  $\hspace{0.2cm} \dag$ \\
                                     & CPU                         &  0.04    &  0.09      &  0.42      &      2.77        &              &  \\[0.2cm]

BiCGSTAB-ILU        & Iter   &  0.5     &  0.5    &   0.5   & 0.5  & 0.5  &  0.5 \\
                   & CPU  &  0.02    &  0.02      &  0.03      &   0.06    & 0.21    & 0.92 \\[0.2cm]

BiCGSTAB-TTSCSP        & $\alpha_{opt}$            & 0.72      & 0.48    & 0.32    & 0.23    & 0.16    & 0.12  \\
                                    & $\beta_{opt}$             & 0.20      & 0.20    & 0.20    & 0.20    & 0.20    & 0.20  \\
                                    & Iter                             & 3.0         & 3.5      &  4.0       &  4.5     &  5.0      &  5.5    \\
                                    & CPU                            & 0.03      & 0.06    & 0.33    & 1.82    & 15.37  & 84.19  \\[0.2cm]

BiCGSTAB-TTSCSP                         & Iter     & 3.5        & 3.5      &  3.5      &  3.5     &  3.5       &  3.5    \\
 $\hspace{1cm}\alpha=\beta=1$   & CPU    & 0.01      & 0.04     & 0.27    & 1.45  & 11.38  & 57.97 \\[0.2cm] \hline

 \end{tabular}
\end{table}
\begin{table}[!ht]
\centering
\caption{Numerical results of ITTSCSP, ITSCSP, ISCSP and IPMHSS  for Example \ref{Ex3}.} \label{Tbl6}
\bigskip
\begin{tabular}{lllllllll} \hline
Method            &  $m$          & 32    & 64    &128    &  256  & 512     & 1024  &   2048    \\ \hline \vspace{-0.3cm} \\ \vspace{0.0cm}

ITTSCSP          & $\alpha_{opt}$    & 1.10    & 0.53     & 0.35    & 0.22     & 0.16    & 0.12     & 0.10 \\
                       & $\beta_{opt}$     & 0.16    & 0.16     & 0.16    & 0.16     & 0.16    &  0.15    &  0.15\\
                      & Iter                      & 6         & 8         & 11       &  14       & 17       & 19        &  21\\
                      & CPU                      & 0.04    & 0.09    & 0.35    & 2.42     & 23.76   & 128.66 &  847.67  \\[0.2cm]

ITSCSP             & $\alpha_{opt}$   & 0.19    & 0.18     & 0.17      & 0.16    & 0.15     & 0.11      &  0.08\\
                      & Iter                      & 16       & 16        & 17         &  17      & 17        & 24         &  34    \\
                      & CPU                     & 0.07    & 0.15     & 0.54      & 2.84     & 24.91   & 223.65   & 1270.78  \\[0.2cm]

ISCSP             & $\alpha_{opt}$  & 1.9     & 1.38     & 1.16      & 1.05    &  1.00     &  0.98     &  0.96 \\
                      & Iter                   & 15      & 25        & 41        &  70      &   108     &   150     &   182  \\
                      & CPU                   & 0.04   & 0.12     & 0.65     & 5.49     & 71.12    & 495.23  &   3484.17 \\[0.2cm]


IPMHSS             & $\alpha_{opt}$   & 0.33     & 0.42    & 0.54     & 0.65     & 0.71   & 0.9        &  1.2\\
                       & Iter                      & 30        & 30       & 30        & 31       & 32       & 33         &  38 \\
                       & CPU                     & 0.1       & 0.26    & 1.05     & 5.78     & 51.62   & 270.88  & 1596.2\\  \hline

 \end{tabular}
\end{table}
\begin{example}\label {Ex3}\rm
(See \cite{Bai2,Bai3}) Consider the system of linear equations $\eqref{Eq1}$ as following
$$
T=I\otimes V + V \otimes I  \quad   \textrm{and}  \quad  W=10(I \otimes V_c + V_c \otimes I) +9(e_1 e^T_m + e_m (e^T_m) \otimes I,
$$
where $V = \textrm{tridiag}(-1, 2,-1) \in \mathbb{R}^{m \times m}$, $V_c = V - e_1 e^T_m - e_m e^T_1 \in \mathbb{R}^{m \times m}$ and $e_1$ and $e_m$ are the first and last unit vectors in $R^m$, respectively. We take the right-hand side vector $b$ to be $b = (1 + i)A\textbf{1}$, with \textbf{1} being the vector of all entries equal to 1.  Here $T$ and $W$ correspond to the five-point centered difference matrices approximating the negative Laplacian operator with homogeneous Dirichlet boundary conditions and periodic boundary conditions, respectively, on a uniform mesh in the unit square $[0, 1]\in [0, 1]$ with the mesh-size
$h = 1/(m+ 1)$.
\end{example}


Numerical results for Examples  \ref{Ex1}-\ref{Ex3} are listed in Tables \ref{Tbl1}-\ref{Tbl6}. In  Tables \ref{Tbl1}, \ref{Tbl1-b},  \ref{Tbl3} and \ref{Tbl5} the numerical results of the exact version of the iteration methods are presented and those of the inexact versions are given in Tables \ref{Tbl2}, \ref{Tbl2-b}, \ref{Tbl4} and \ref{Tbl6}. For the PMHSS, the SCSP, the TSCSP and the TTSCSP  iteration methods, the optimal value of $\alpha$ ($\alpha_{opt}$) were found experimentally and are the ones resulting in the least numbers of iterations.  In Table \ref{Tbl4}, for $m\geqslant 64$,  the \verb"ict" function of \textsc{Matlab} encounters a nonpositive pivot during the computation of the inexact Cholesky factorization. Therefore, we have used a smaller value of the dropping tolerance (\verb"droptol") which have been presented in the table.

 {As the numerical results show for all the examples, the TTSCSP iteration method often  outperforms the other methods in terms of both the number of iterations and the CPU time.}

From Tables \ref{Tbl1}, \ref{Tbl2}, \ref{Tbl1-b} and \ref{Tbl2-b} we see that, for Example \ref{Ex1}, the iteration counts with TTSCSP and ITTSCSP are  the same and with problem size remain almost constant. From the CPU time of view, we observe that the ITTSCSP iteration method is superior to the TTSCSP iteration method for large problems. On the other hand, the optimal values of the parameters remain constant for both of the TTSCSP and ITTSCSP iteration methods.
Almost all of these comments can be posed for Example \ref{Ex2}.

Numerical results for Example \ref{Ex3} show that the iteration counts with the TTSCSP and the ITTSCSP iterations growth moderately with problem size.
Also, this table show the optimal value of the parameter $\alpha$ remains almost constant with problem size for both of the TTSCSP and the ITTSCSP methods, whereas the optimal value of the parameter $\alpha$ decreases moderately.

 {From Tables \ref{Tbl1}, \ref{Tbl1-b}, \ref{Tbl3} and \ref{Tbl5} we see that the TTSCSP preconditioner, both with the optimal values of the parameters and $\alpha=\beta=1$, is very effective in reducing the number of iterations of the BiCGSTAB iteration method as well as the CPU time. Moreover, there is not any significant difference between the numerical results of the TTSCSP preconditioner with the optimal values of the parameters and $\alpha=\beta=1$. In Examples \ref{Ex2} and \ref{Ex3}, for $m=1024$,  we observe that the BiCGSTAB method does not converge in 500 iterations, whereas the preconditioned BiCGSTAB method with the TTSCSP preconditioner converges only in 3.5 and 5.5 iterations, respectively. In Example \ref{Ex1} we see that the iteration counts of the  BiCGSTAB with the TTSCSP preconditioner remain constant with the problem size, those of Example \ref{Ex2} decreases, and those of Example \ref{Ex3} increases by 0.5.}

 {From Tables \ref{Tbl1} and \ref{Tbl1-b} we see that the iteration counts  of the BiCGSTAB method with the TTSCSP preconditioner is always less than that of with the ILU preconditioner for both of the parameters $\tau=h$ and $\tau=500h$. However, the CPU time of the BiCGSTAB with the TTSCSP preconditioner is always less than that of with the ILU preconditioner when $\tau=500h$, and the result is opposite for $\tau=h$. From Tables \ref{Tbl3} and \ref{Tbl5} we see that the ILU preconditioner outperforms the TTSCSP preconditioner for Examples \ref{Ex2} and \ref{Ex3} from both the CPU time point of view and the number of iterations. Nevertheless, the TTSCSP preconditioner has a main advantages over the ILU preconditioner. In the implementation of
the TTSCSP preconditioner two systems with the coefficient matrices $\alpha W+T$ and $W+\beta T$ should be solved. If these systems are solved inexactly by using the CG method, then there is not  any additional matrix to store, however, in the ILU preconditioning the ILU factors  of the matrix $A$ should be stored.}

 {Using the TTSCSP and the ILU preconditioners for the BiCGSTAB iteration method result in faster solution times than using TTSCSP as a stationary method for some problems. However, the BiCGSTAB method needs additional operations such as inner products. Inner products require global communication on parallel computers and they are a parallel bottleneck on  current multicore architectures. Therefore, it may be better to apply the TTSCSP method as a stationary method for solving the system in some cases.}

\section{Conclusion}\label{Sec5}
We have established and analyzed a two-parameter TSCSP iteration (TTSCSP) method for solving an important class of complex symmetric system of  linear equations $(W+iT)u=b$, where $W$ is symmetric positive definite and $T$ is symmetric positive semidefinite. Sufficient conditions for the convergence of the method have also been presented. An upper bound for the spectral radius of the iteration matrix along with the parameters which minimize this bound have been given. We have compared the numerical results of the TTSCSP iteration method with those of the SCSP, the TSCSP and PMHSS iteration methods. Numerical results show that the TTSCSP method is superior to the other methods in terms of both the iteration counts and the CPU time.
Numerical comparisons of the inexact TTSCSP (ITTSCSP) with ISCSP, ITSCSP and IPMHSS methods have also been presented which show the superiority of the ITTSCSP to the other methods. Numerical results show that the BiCGSTAB method in conjunction with the TTSCSP preconditioner is very effective for solving  $(W+iT)u=b$.

\section*{Acknowledgements}

 {The work of Davod Khojasteh Salkuyeh is partially supported by University of Guilan. The authors would like to thank Prof. M. Benzi and anonymous referees for their valuable  comments and suggestions which greatly improved the  quality of the paper.}

\end{document}